\numberwithin{equation}{section}
\newtheorem{theorem}{Theorem}[section]
\newtheorem*{theorem*}{Theorem}
\newtheorem{lemma}[theorem]{Lemma}
\newtheorem*{observation*}{Observation}
\newtheorem{corollary}[theorem]{Corollary}
\theoremstyle{definition}{

}
\newcommand{\R}{\mathbb R}
\newcommand{\C}{\mathcal C}
\newcommand{\N}{\mathbb N}
\newcommand{\Z}{\mathbb Z}
\newcommand{\eps}{\varepsilon}
\newcommand{\A}{\mathcal A}
\renewcommand{\and}{\hbox{ {\rm and} }}
\newcommand{\E}{\mathbb{E}}
\renewcommand{\P}{\mathbf{P}}
\newcommand{\prob}{\P}
\def\Reff{R_{\rm eff}}
\def\EucB{B_{\rm euc}}
\def\deg{{\rm deg}}
\renewcommand{\epsilon}{\varepsilon}
\renewcommand{\dagger}{*}
\newcommand{\da}{\dagger}
\newcommand{\En}{{\mathcal{E}}}
\newcommand{\lra}{\leftrightarrow}
\newcommand{\be}{\begin{eqnarray}}
\newcommand{\ee}{\end{eqnarray}}
\date{}
\begin{document}
\title{Recurrence of planar graph limits}

\author{Ori Gurel-Gurevich}
\address{Ori Gurel-Gurevich\hfill\break
Department of Mathematics \\
University of British Columbia.}
\email{origurel@math.ubc.ca}
\urladdr{}

\author{Asaf Nachmias}
\address{Asaf Nachmias\hfill\break
Department of Mathematics \\
University of British Columbia.}
\email{asafnach@math.ubc.ca}
\urladdr{}

\begin{abstract} We prove that any distributional limit of finite planar graphs in which the degree of the root has an exponential tail is almost surely recurrent. As a corollary, we obtain that the uniform infinite planar triangulation and quadrangulation (UIPT and UIPQ) are almost surely recurrent, resolving a conjecture of Angel, Benjamini and Schramm \cite{AS, BS}.

We also settle another related problem of \cite{BS}. We show that in any bounded degree, finite planar graph the probability that the simple random walk started at a uniform random vertex avoids its initial location for $T$ steps is at most ${C \over \log T}$.

\end{abstract}

\maketitle


\section{Introduction}
A {\em distributional limit} of finite graphs $G_n$ is a random rooted infinite graph $(U,\rho)$ with the property
that neighborhoods of $G_n$ around a random vertex converge in distribution to neighborhoods of $U$ around $\rho$, see precise definitions below. This limit was defined by Benjamini and Schramm \cite{BS}. Their motivation was the study of infinite random planar maps, a widely studied model in the probability, combinatorics and statistical physics communities for generic two-dimensional geometries and quantum gravity (see \cite{LeM, ADJ, DS, AS} and the references within). The canonical example of such a limit is Angel and Schramm's \cite{AS} {\em uniform infinite planar triangulation} (UIPT) and is obtained by taking the distributional limit of a uniform random triangulation on $n$ vertices. Here a triangulation is a simple planar graph in which every face has $3$ edges. 



These authors conjectured that the UIPT is almost surely recurrent (see \cite[Conjecture 1.12]{AS} and \cite[Page 3]{BS}). It is shown in \cite{BS} that a distributional limit of uniformly bounded degree finite planar graphs is almost surely recurrent. However, the degrees of random planar maps and the UIPT are unbounded so one cannot appeal to this result. In this paper we prove this conjecture.

\begin{theorem}\label{mainthm} Let $(U,\rho)$ be a distributional limit of planar graphs such that the degree of $\rho$ has an exponential tail. Then $U$ is almost surely recurrent.
\end{theorem}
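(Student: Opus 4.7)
The plan is to adapt the circle-packing strategy of He and Schramm, which establishes the theorem when the triangulation has uniformly bounded degree, to the general setting in which the degree of the root merely has an exponential tail. The starting point is that any distributional limit of finite graphs $(U,\rho)$ is reversible for the simple random walk: the joint law of $(U,\rho,\rho_1)$ coincides with that of $(U,\rho_1,\rho)$, where $\rho_1$ is a uniform neighbour of $\rho$. This reversibility is inherited from the uniform-root property of the finite approximants and will be the main vehicle for transporting information around $U$.

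First I would reduce to the case where $U$ is almost surely a planar triangulation, by completing its embedding with auxiliary edges; since adding edges can only decrease the effective resistance to infinity, it suffices to prove recurrence of the enveloping triangulation. By the Koebe--Andreev--Thurston theorem, any simple planar triangulation admits a circle packing in either the Euclidean plane $\mathbb{C}$ (parabolic type) or the open unit disk $\mathbb{D}$ (hyperbolic type). Assuming for contradiction that $U$ is transient on an event of positive probability, one would argue that on this event $U$ is almost surely of hyperbolic type, and work with a random circle packing $P$ of $U$ in $\mathbb{D}$.

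Next I would extend He and Schramm's statement---hyperbolic type forces the simple random walk to converge to a point on $\partial \mathbb{D}$---to the unbounded-degree setting by a truncation argument: delete or contract vertices of degree exceeding a threshold $M$, invoke the bounded-degree theory on what remains, and then let $M \to \infty$ using the exponential-tail hypothesis together with stationarity of $\deg(\rho_n)$ along the walk. The resulting random boundary point $\xi_\infty \in \partial \mathbb{D}$ carries a law that must be invariant under the stationary dynamics, and I would use the mass-transport principle to show that no such invariant target point can exist: averaging a suitable nonnegative functional over the walk transports mass from the bulk to the boundary, which is incompatible with reversibility.

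The principal obstacle is managing the circle packing when degrees are unbounded. In the bounded-degree argument the ring lemma provides uniform comparability of adjacent radii, tying the combinatorial random walk to the Euclidean geometry of the packing; this fails dramatically at a very high-degree vertex. The exponential-tail hypothesis must therefore be leveraged indirectly---via reversibility, heavy-degree vertices are rare enough along typical trajectories---and combined with a quantitative geometric estimate (plausibly in the spirit of the $C/\log T$ avoidance bound advertised as the paper's second main result) to recover enough control for the contradiction to go through.
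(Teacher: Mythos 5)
Your proposal takes a genuinely different route from the paper, but it has a critical gap at its foundation. You propose to follow the He--Schramm strategy: assume transience, deduce the packing is of hyperbolic type, pack in $\mathbb{D}$, show the walk converges to a boundary point, and derive a contradiction via mass transport. The problem is that the equivalence ``CP-hyperbolic $\Leftrightarrow$ transient'' is a \emph{bounded-degree} phenomenon. For unbounded-degree triangulations the He--Schramm dichotomy between parabolic and hyperbolic circle-packing type still holds as a statement about packings, but its link to recurrence/transience of the simple random walk breaks: there exist transient CP-parabolic triangulations and recurrent CP-hyperbolic ones. So your very first step after assuming transience --- ``on this event $U$ is almost surely of hyperbolic type'' --- is unjustified, and in fact false in general. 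An exponential tail on $\deg(\rho)$ does not rescue this; the dichotomy fails pointwise, not just in a tail-probability sense, so there is no clean way to ``let $M\to\infty$'' from truncated packings. (Relatedly, deleting or contracting high-degree vertices produces a graph whose circle packing has no controlled relationship to the packing of $U$; the Ring Lemma is what makes radii comparable, and it is precisely the thing you lose.)

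The paper sidesteps this entirely by never circle-packing the unbounded-degree graph at all. Instead it introduces the \emph{star-tree transform}: subdivide every edge and replace the star at each vertex $v$ by a balanced binary tree on $\deg(v)$ leaves. This produces a planar graph $U^\da$ of maximal degree $3$, to which the Benjamini--Schramm bounded-degree machinery applies directly (in a sharpened, quantitative form: Theorem \ref{quantrecurrence}). The price is that one must assign resistances $1/\deg(v)$ on the tree $T_v$ to make recurrence of $U^\da$ imply recurrence of $U$ (Lemma \ref{starrecurrence}); this is a clean flow argument. The exponential tail is then used only to control the \emph{perturbation} between unit resistances and these degree-dependent resistances (Lemma \ref{highmarksdontmatter}), via stationarity of the marked limit: high marks are exponentially rare along the walk, so they contribute negligibly to hitting probabilities. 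Your instinct to exploit reversibility/stationarity of degrees along the walk is the right one and is exactly what Lemma \ref{highmarksdontmatter} formalizes, but it is applied to a resistance perturbation on a bounded-degree auxiliary graph, not to a circle packing of $U$ itself.

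Two smaller remarks. Your reduction to triangulations by adding edges is fine as a Rayleigh monotonicity statement, and the paper does something similar in Lemma \ref{finitegraphfinal}. And your appeal to a mass-transport argument ruling out a canonical boundary point is a legitimate technique in the unimodular setting, but it is not needed here and would require substantial extra work to set up once the degrees are unbounded; the paper avoids it by staying entirely within effective-resistance estimates.
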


\begin{corollary} \label{maincor} The UIPT is almost surely recurrent.
\end{corollary}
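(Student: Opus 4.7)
The plan is to deduce the corollary directly from Theorem \ref{mainthm} by verifying its two hypotheses for the UIPT: that it is a distributional limit of finite planar graphs, and that the degree of the root has an exponential tail.

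First I would recall that the UIPT is \emph{defined} by Angel and Schramm \cite{AS} precisely as the distributional limit of $T_n$, a uniformly chosen triangulation on $n$ vertices rooted at a uniformly chosen vertex. Since every finite triangulation is a (simple) planar graph, $(U,\rho) = \mathrm{UIPT}$ fits the hypothesis of Theorem \ref{mainthm} regarding the ambient class of graphs. The existence of this distributional limit and the fact that it is supported on one-ended infinite triangulations of the plane are standard, and I would simply cite \cite{AS}.

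Second, I would verify the exponential tail of $\deg(\rho)$. Angel and Schramm computed the law of the root degree in the UIPT explicitly: they showed that there is an explicit probability mass function $p_k = \P(\deg(\rho)=k)$ given by a ratio of Catalan-type numbers, and from this expression one reads off $p_k \le C \cdot \alpha^k$ for some $\alpha<1$ and $C<\infty$. Thus the root degree has an exponential tail, and the hypothesis of Theorem \ref{mainthm} is satisfied.

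Combining these two observations, Theorem \ref{mainthm} applies directly and gives that the UIPT is almost surely recurrent. I do not anticipate any genuine obstacle: the corollary is a formal consequence of the main theorem together with already-published results of Angel and Schramm, and the only thing to be careful about is citing the correct degree-tail estimate from \cite{AS} rather than re-deriving it.
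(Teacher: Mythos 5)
Your proposal is correct and is essentially identical to the paper's own one-line proof: both deduce the corollary from Theorem \ref{mainthm} by recalling that the UIPT is by construction a distributional limit of finite planar triangulations and that the root degree has an exponential tail, citing Angel--Schramm (the paper also cites Gao--Richmond for the tail estimate). The only difference is cosmetic: you sketch how the tail bound follows from the explicit degree distribution, whereas the paper simply cites the relevant lemmas.
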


The UIPT's recurrence conjecture has had strong circumstantial evidence supporting it. One such evidence is the recent result of Gill and Rohde \cite{RG} asserting that the natural Riemann surface associated with the UIPT (obtained by gluing equilateral triangles together according to the combinatorics of the graph) is almost surely parabolic, that is, Brownian motion on this surface is recurrent. Another such evidence, found by Benjamini and Curien \cite{BC2}, is that the UIPT is Liouville, that is, every bounded harmonic function on it is constant. Every recurrent graph is Liouville, and when $G$ is a bounded degree planar graph the Liouville property implies recurrence \cite{BS2} (the bounded degree condition in the last statement is necessary). \\

A popular variation of the UIPT is the {\em uniform infinite planar quadrangulation} (UIPQ) and is defined similarly with the role of triangulations replaced by quadrangulations. This model was constructed by Krikun \cite{Krik} (see also \cite{CMM}) and has received special attention since it is appealing to study it using bijections with random labeled trees.

\begin{corollary}\label{maincor2} The UIPQ is almost surely recurrent.
\end{corollary}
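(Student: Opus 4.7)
The plan is to invoke Theorem \ref{mainthm} directly, exactly in the spirit of Corollary \ref{maincor}. By its construction \cite{Krik, CMM}, the UIPQ is the distributional limit of uniformly random rooted quadrangulations $Q_n$ with $n$ faces, and each $Q_n$ is a finite planar graph. Hence the planarity-plus-limit hypothesis of Theorem \ref{mainthm} is immediate, and it remains only to verify that the degree of the root in the UIPQ has an exponential tail.

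To establish the exponential tail, my approach would be via the Cori--Vauquelin--Schaeffer (CVS) bijection between rooted pointed quadrangulations with $n$ faces and well-labeled plane trees with $n$ edges. Under this bijection, the degree of a vertex of the quadrangulation equals a combinatorial functional of the labeled tree (roughly, the number of corners whose label matches that of the vertex and which are ``glued'' to it by the CVS construction). Passing to the infinite limit via the uniform infinite well-labeled tree of Chassaing--Durhuus, this functional at the root of the UIPQ is dominated by a random variable with geometric-type tail, giving an exponential tail on $\deg(\rho)$. Alternatively, I would establish the uniform bound $\prob(\deg(\rho_n) \geq k) \leq C e^{-ck}$ at the finite level, where $\rho_n$ is a uniformly chosen vertex of $Q_n$, and then pass to the distributional limit; this finite-level bound follows from the explicit enumeration formulas for quadrangulations with a marked vertex of prescribed degree.

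The main technical point, rather than a serious obstacle, is the discrepancy between rooting conventions: the UIPQ is customarily rooted at an oriented edge, while Theorem \ref{mainthm} takes limits around a uniformly chosen vertex. This is reconciled using the invariance of uniform quadrangulations under re-rooting at a uniformly chosen vertex: the oriented-edge root of a uniform $Q_n$ can be exchanged for a uniform-vertex root without altering the limiting neighborhood law, and the exponential tail transfers between the two rootings. Once this identification is in place, Theorem \ref{mainthm} yields almost sure recurrence of the UIPQ.
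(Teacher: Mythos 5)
Your proposal follows essentially the same route as the paper: the UIPQ is a distributional limit of finite planar graphs with exponentially tailed root degree, so Theorem \ref{mainthm} applies directly. The only difference is that the paper simply cites \cite[Proposition 9]{BC1} for the exponential tail, whereas you sketch (without executing) a derivation via the Schaeffer bijection or enumeration; the rest, including the re-rooting remark, matches the paper's one-line argument.
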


Our approach allows us to answer another problem posed by Benjamini and Schramm (Problem 1.3 in \cite{BS}). Let $G$ be a finite graph and consider the simple random walk on it $(X_t)_{t \geq 0}$ where $X_0$ is a uniform random vertex of $G$. Let $\phi(T,G)$ be the probability that $X_t \neq X_0$ for all $t=1,\ldots, T$. For any $D\geq 1$ define
$$ \phi_D(T) = \sup \big \{ \phi(T,G) : G \hbox{ is planar with degrees bounded by } D \big \} \, .$$
The almost sure recurrence of a distributional limit of planar graphs of bounded degree (the main result of \cite{BS}) is equivalent to $\phi_D(T) \to 0$ as $T\to \infty$ for any fixed $D$. It is asked in \cite{BS} what is the rate of decay of this function. The probability of avoiding the starting point for $T$ steps on $\Z^2$ is of order ${1 \over \log T}$ so this lower bounds $\phi_D(T)$ since $\Z^2$ is a distributional limit of finite planar graphs. Here we provide a matching upper bound.

\begin{theorem} \label{mainthm3} For any $D\geq 1$ there exists $C<\infty$ such that for any $T\geq 2$
$$ \phi_D (T) \leq {C \over \log T} \, .$$
\end{theorem}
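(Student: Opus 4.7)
The plan is to prove Theorem \ref{mainthm3} by circle-packing the graph and combining a logarithmic resistance lower bound with an exit-time upper bound, in direct analogy with the $C/\log T$ bound on $\mathbb{Z}^2$. Given a finite planar graph $G$ of maximum degree $D$, I would first triangulate $G$ (at the cost of absorbing constants into $C = C(D)$) and apply the Koebe--Andreev--Thurston theorem to obtain a circle packing $\{B(z_v, r_v)\}_{v \in V}$ of $G$ in the unit disk $\mathbb{D}$. The Rodin--Sullivan ring lemma then gives $r_u \asymp r_v$ (with constants depending on $D$) whenever $u \sim v$, so every Euclidean edge length $|z_u - z_v| = r_u + r_v$ is comparable to $r_v$.

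The main estimate is a resistance lower bound. For every $v$ and $R > r_v$, set $A(v, R) = \{u : |z_u - z_v| \geq R\}$; I would show
$$ \Reff(v, A(v, R)) \geq c_D \log(R/r_v). $$
This follows by the standard test-function method, using $f(u) = \min\{1, \log(|z_u - z_v|/r_v)/\log(R/r_v)\}$ as a discrete approximation to the planar harmonic function $\log|z - z_v|$: its discrete Dirichlet energy is bounded by converting the edge sum to a Euclidean area integral over the annulus $\{r_v \leq |z - z_v| \leq R\}$ via the ring lemma together with the packing inequality $\sum_u r_u^2 \mathbf{1}\{z_u \in A\} \leq \text{area}(A)$, yielding energy $\leq C_D/\log(R/r_v)$. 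Combined with the standard identity $P_v(\tau_{A(v,R)} < \tau_v^+) = 1/(\deg(v) \Reff(v, A(v,R)))$, this gives escape probability at most $C_D/\log(R/r_v)$.

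Paired with the resistance bound, I need an expected exit-time estimate $E_v[\tau_{A(v, R)}] \leq C_D (R/r_v)^2$, proved by the martingale $|z_{X_t} - z_v|^2$ minus its compensator, whose per-step variance at vertex $u$ is $\asymp r_u^2$ by the ring lemma. Combining the two ingredients,
$$ P_v(\tau_v^+ > T) \leq P_v(\tau_{A(v, R)} < \tau_v^+) + E_v[\tau_{A(v, R)}]/T \leq \frac{C_D}{\log(R/r_v)} + \frac{C_D (R/r_v)^2}{T}, $$
and setting $R = r_v \sqrt{T}$ balances both terms at $C_D/\log T$, uniformly in $v$. Since the bound holds pointwise in $v$, averaging over the uniform starting vertex yields $\phi(T, G) \leq C/\log T$.

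The principal obstacle is the exit-time bound, since the step length $|z_{X_t} - z_{X_{t+1}}|$ depends on $r_{X_t}$, which can be much smaller than $r_v$ in far regions of the packing and thereby dramatically slow the walk's Euclidean displacement; pointwise the naive martingale estimate fails. To overcome this I would exploit the global area bound $\sum_u r_u^2 \leq 1$, arguing that averaged over the uniform root, the walk spends most of its time in regions where step sizes are not much smaller than $r_v$. Alternatively one could perform a time-change comparing the walk to Brownian motion in $\mathbb{D}$, absorbing the varying step sizes into a conformal time parameter, so that the exit-time estimate reduces to the classical bound for planar Brownian motion.
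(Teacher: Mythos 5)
Your overall strategy---circle pack, lower bound the return resistance logarithmically, upper bound an exit time, combine---is the right one, and your resistance estimate via the test function $\log|z - z_v|$ is essentially the paper's Corollary~\ref{resisttolarge}. However, the exit-time bound $\E_v[\tau_{A(v,R)}] \leq C_D (R/r_v)^2$, which you correctly flag as the principal obstacle, is a genuine gap: it is false pointwise. A cluster of arbitrarily many tiny circles inside $\EucB(z_v, R)$ (an accumulation point of the packing) forces the graph distance from $v$ to $A(v,R)$, and hence the exit time, to be arbitrarily large, independently of $R/r_v$. Neither of your proposed repairs is worked out: the area bound $\sum_u r_u^2 \le 1$ does not by itself control the walk's occupation time in the small-circle region (the walk can revisit it many times), and a rigorous time-change comparison to Brownian motion would require controlling the geometric distortion of the packing, which is a far harder route than what the proof actually needs.

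The paper avoids the Euclidean exit-time estimate entirely. The missing ingredient is the Benjamini--Schramm $(\delta,s)$-supported-points lemma (quantified as Lemma~\ref{BSquant}), which says that, for a \emph{uniform random} root, with high probability one can find a set $B$ that \emph{simultaneously} has small cardinality, $|B| = O(k)$, and large resistance to its complement, $\Reff(\rho \lra V\setminus B) \geq c\log k$. The accumulation region is carved out by a tiny disk $\EucB(p, r^{-1})$, and one lower-bounds the resistance across the resulting annulus from both sides, combining \eqref{quant.resisttosmall} and \eqref{quant.resisttolarge} via the parallel law \eqref{exercise}. Once $|B|$ is controlled, the exit time is bounded combinatorially rather than geometrically: by the commute-time identity \eqref{commute} and the trivial fact that $\Reff$ is at most graph distance,
$$ \E_{X_0}\tau_{V\setminus B} \le 2D|B|\,\Reff(X_0\lra V\setminus B) \le 2D|B|^2 = O(T^{2/3}) $$
on choosing $k = T^{1/3}$, and this balances against the failure probability $O(k^{-1/3}\log k)$ and the escape-probability term $O(1/\log k)$ to give the $O(1/\log T)$ bound. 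In short, the idea you are missing is that the resistance lower bound must come packaged with a \emph{cardinality} bound on the separating set; the exit time is then controlled through the graph structure rather than through Euclidean displacement, which is precisely what makes the accumulation-point pathology harmless.
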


Our results are related to two active research areas: graph limits and random planar maps. Let us briefly expand on each in order to introduce some definitions and background.

\subsection{Distributional graph limits}\label{sec-dlimit} The notion of the distributional limit of a sequence of graphs was introduced by Benjamini and Schramm \cite{BS}. With slightly different generality this was studied by Aldous and Steele \cite{AldSte} under the name ``local weak limit'' and by Aldous and Lyons \cite{AldLyo} under the name ``random weak limit''. This limiting procedure is best suited for graphs with bounded average degree and is hence natural in the setting of finite planar graphs. Convergence of sequences of dense graphs requires quite a different treatment (see \cite{LS, BCLSV}) though some interesting connections between the two are emerging (see \cite{BCKL}).

A rooted graph is a pair $(G,\rho)$ where $G$ is a graph and $\rho$ is a vertex of $G$. For any integer $r \geq 0$ we write $B_G(\rho,r)$ for the ball around $\rho$ of radius $r$ in the graph distance. The space of rooted graphs is endowed with a natural metric: the distance between $(G,\rho)$ and $(G', \rho')$ is ${1 \over \alpha+1}$ where $\alpha$ is the supremum over all $r$ such that $B_G(\rho,r)$ and $B_{G'}(\rho',r)$ are isomorphic as rooted graphs. Let $G_n$ be a sequence of finite graphs and let $\rho_n$ be a random vertex of $G_n$ drawn according to some probability measure on the vertices of $G_n$. We say that $(G_n, \rho_n)$ has distributional limit $(U,\rho)$, where $(U,\rho)$ is a random rooted graph, if for every fixed $r>0$ the random variable $B_{G_n}(\rho_n,r)$ converges in distribution to $B_U(\rho,r)$.

It makes sense to choose the random root according to the stationary distribution in $G_n$ (in which the probability of choosing a vertex is proportional to its degree) because then the resulting limit $(U,\rho)$ is invariant under the random walk measure, that is, $(U,\rho)$ has the same distribution as $(U,X_1)$ where $X_1$ is a uniform random neighbor of $\rho$. It is also common to choose the root according to the uniform distribution on the vertices of $G_n$. This may lead to a different distributional limit. However, in our setting this does not matter as we now explain. Let $\rho_n^\pi$ and $\rho_n^u$ be random roots of $G_n$ drawn according to the stationary and uniform distributions, respectively. If the average degree of $G_n$ is bounded by some number $D$ and $G_n$ has no isolated vertices, then it is immediate that for any event $A$ on rooted graphs we have $\prob((G_n,\rho_n^u)\in A) \leq D \prob((G_n,\rho_n^\pi)\in A)$. Hence, the distributional limit of $(G_n, \rho_n^u)$ is absolutely continuous with respect to the limit of $(G_n,\rho_n^\pi)$. In fact, an appeal to H\"older's inequality shows that if the degree distribution of $G_n$ has a bounded $(1+\eps)$-moment, then the two limits are mutually absolutely continuous with respect to each other (we do not use this fact in this paper though).


\subsection{Random planar maps} Random planar maps is a widely studied topic at the intersection of probability, combinatorics and statistical physics. We give here a very brief account of this topic and refer the interested reader to \cite{AS, LeM} and the many references within. The enumerative study of planar maps was initiated by Tutte \cite{T} who counted the number of planar graphs of a given size of various classes, including triangulations. Cori and Vauquelin \cite{CV}, Schaeffer \cite{Sc} and Chassaing and Schaeffer \cite{CS} provided robust bijections between planar maps and labeled trees --- the specifics of these bijections change depending on the class of the planar maps considered and many variations and extensions are known. The common to all of these is that random planar maps can be constructed from random labeled trees. This approach has shed a new light on the asymptotic geometry of random maps and spurred a new line of research: limits of large planar random maps. There are two natural notions of limits of random planar maps: the scaling limit and the aforementioned distributional limit.

%

In the study of scaling limits of random planar maps, one considers the random finite map $T_n$ on $n$ vertices as a random metric space induced by the graph distance, scales the distances properly (it turns out that $n^{-1/4}$ is the correct scaling) and studies its limit in the Gromov-Hausdorff sense. The existence of such a limit was first suggested by Chassaing and Schaefer \cite{CS}, Le Gall \cite{Le1}, Marckert and Mokkadem \cite{MM} who named it the {\em Brownian map}. The challenges involved in this line of research are substantial --- existence and uniqueness of the limit are the first step, but even more challenging is the issue of universality, that is, that random planar maps of different classes exhibit the same limit, up to parametrization. For the case of random $p$-angulations this research has recently culminated in the work of Le Gall \cite{Le2} who established this for $p=3$ and all even $p$ and independently Miermont \cite{Mier} for the case $p=4$. It remains open to prove this for all $p$.

%

The study of distributional limits, while bearing some similarities, is independent of the scaling limit questions. Let $G_n$ be a random planar triangulation and $\rho_n$ a random vertex chosen uniformly (or according to the stationary measure, as mentioned above). Angel and Schramm \cite{AS} showed that a distributional limit exists and that it is a one-ended infinite triangulation of the plane almost surely. They termed this limit as the {\em uniform infinite planar triangulation} (UIPT). The uniform infinite planar quadrangulation (UIPQ) was later constructed by Krikun \cite{Krik}.

The research in this area is focused on almost sure geometric properties of this limiting geometry. It is an interesting geometry and the comparison of it with the usual Euclidean geometry is especially striking. It is invariant, planar and polynomially growing, yet very fractal: Angel \cite{A} showed that a ball of radius $r$ has volume $r^{4+o(1)}$ and the boundary component, separating this ball from infinity, has size $r^{2+o(1)}$ (see also \cite{CS}). This suggests that the random walk on the UIPT/UIPQ should be subdiffusive, that is, that the typical distance of the random walk from the origin after $t$ steps is $t^{\beta+o(1)}$ for some $\beta \in [0, 1/2)$. Benjamini and Curien \cite{BC1} show that $\beta \leq 1/3$ in the UIPQ, however, it is believed that the true exponent is $\beta=1/4$.

\subsection{Sharpness} Theorem \ref{mainthm} is sharp in the following sense. For any $\alpha\in(0,1)$ there exists a distributional limit of planar graphs $(U,\rho)$ such that $\prob( \deg(\rho) \geq k) \leq C e^{-c k^\alpha}$ for some $C,c$ that is transient almost surely. Indeed, let $T_h$ be a binary tree of height $h$ and replace each edge at height $k=1,\ldots,h$ from the leaves by a disjoint union of $k^{1/\alpha}$ paths of length $2$ (or parallel edges). In the distributional limit of $T_h$ as $h\to \infty$, almost surely, the effective resistance from the root to infinity is at most $2\sum_{k=1}^{\infty} k^{-1/\alpha} <\infty$ hence it is transient. Furthermore, the probability that the degree of a uniformly chosen vertex of $T_h$ is at least $k$ can easily be computed to be of order $e^{-ck^\alpha}$.

\section{Preliminaries on circle packing and electric networks}
\begin{figure}
\begin{center}
\includegraphics[width=0.5\textwidth]{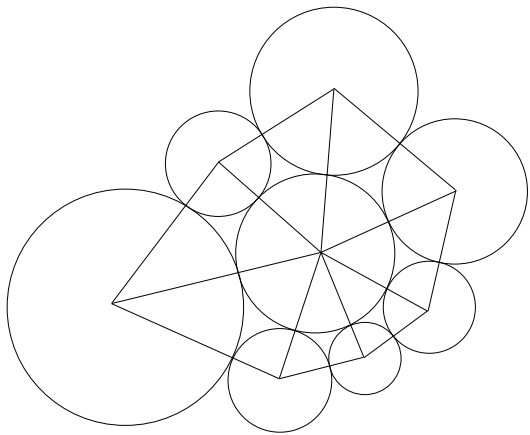}
\caption{A circle packing and its tangency graph}
\end{center}
\label{fig.circlepack}
\end{figure}

\subsection{Circle packing} Our proof relies, as in \cite{BS}, on the theory of circle packing, which we briefly describe below. We refer the reader to \cite{St} and \cite{R} for further information on this fascinating topic. A circle packing is a collection of circles in the plane with disjoint interiors. The tangency graph of a circle packing is a planar graph $G=(V,E)$ in which the vertex set $V$ is the set of circles and two circles are neighbors if they are tangent in the packing. The degree of a circle in the packing is its degree in the tangency graph. See Figure \ref{fig.circlepack}. The Koebe-Andreev-Thurston Circle Packing Theorem (see \cite{St}) asserts that for any finite planar graph $G=(V,E)$ there exists a circle packing in the plane which has tangency graph isomorphic to $G$. Furthermore, if $G$ is a triangulation, then this packing is unique up to M\"{o}bius transformations of the plane and reflections along lines. We will frequently use a simple but important fact known as the Ring Lemma \cite{RS}. If a circle $C$ is completely surrounded by $D$ other circles $C_0, \ldots, C_{D-1}$ (that is, $C_i$ is tangent to $C_{i+1 \,\,\rm{mod}\,\, D}$ and to $C$), then the ratio $r/r_i$ between the radius of $C$ and $C_i$ is bounded above by a constant depending only on $D$. Thus, in a circle packing of a bounded degree triangulation (every inner circle is completely surrounded) the ratio of radii of every two tangent circles is bounded above and below by a constant depending only on $D$, with the possible exception of the three boundary circles. The Circle Packing Theorem and the Ring Lemma are the only facts about circle packing that we will use in this paper.

\subsection{Electric networks}\label{sec-elec} We use some classical facts about electric networks and their connections to random walks, we refer the reader to \cite{LP} for further information. Let $G=(V,E)$ be a finite graph with non-negative edge weights $\{c_e\}_{e \in E}$. We call these weights conductances and their inverses, $R_e=c_e^{-1}$, are called resistances (by convention $0^{-1}=\infty$).
For any two vertices $a\neq z$ define the effective resistance $\Reff(a \lra z; \{R_e\})$ between $a$ and $z$ as the minimum energy $\En(\theta)=\sum_{e\in E} R_e [\theta(e)]^2$ of any unit flow $\theta$ from $a$ to $z$. The unit flow attaining this minimum is called the unit current flow. We often write $\Reff(a \lra z)$ when all the conductances are $1$.

Given two disjoint sets of vertices $A$ and $Z$, the effective resistance $\Reff(A \lra Z; \{R_e\})$ between $A$ and $Z$ is the effective resistance between the two corresponding vertices in the graph obtained from $G$ by contracting the sets $A$ and $Z$ into single vertices and retaining the same resistances on the remaining edges. For convenience, if either $A$ or $Z$ are empty sets, then $\Reff(A \lra Z; \{R_e\})=\infty$. Now we may define effective resistances on infinite graphs --- in this case we will only compute effective resistances between disjoint sets $A$ and $Z$ such that $V \setminus (A \cup Z)$ is finite. A typical example is the effective resistance between a chosen vertex $\rho$ and the complement of a finite set containing $\rho$. When $G$ is infinite we define the effective resistance from $a$ to $\infty$ as
$$ \Reff(a \lra \infty; \{R_e\}) = \lim_{n \to \infty} \Reff(a \lra G \setminus B_n; \{R_e\}) \, ,$$
where $\{B_n\}$ is any sequence of finite vertex sets which exhaust $G$ (the limit does not depend on the choice of exhausting sequence).

For a function $g:V \to \R$, the Dirichlet energy is defined as
$$ \En(g) = \sum_{e=(x,y) \in E} c_e \big [g(x) - g(y) \big ]^2 \, .$$
We will use the dual definition of effective resistance, that is, the discrete Dirichlet principle (see Exercise 2.13 of \cite{LP}) stating that
\be\label{dirichlet} { 1 \over \Reff(A \lra Z; \{R_e\}) } = \min \big \{ \En(g) \, : \, g:V\to \R , g_{|A}=0, g_{|Z}=1 \big \} \, .\ee

Consider the network random walk $(X_n)_{n \geq 0}$ on $G$ with transition probabilities $p(x,y) = c_{(x,y)} [\sum_{y : (x,y)\in E} c_{(x,y)}]^{-1}$ and write $\prob_x$ for the probability measure of a network random walk started at $X_0=x$. Write $\tau$ for the stopping time $\tau = \min\{ n \geq 1 : X_n \in \{a,z\}\}$. It is classical (stemming from the fact that the minimizer of (\ref{dirichlet}) is the unique harmonic function with the corresponding boundary values, see \cite{LP}) that
\be\label{rwresist} \Reff(a \lra z; \{R_e\}) = {1 \over \prob_a(X_{\tau}=z) \displaystyle \sum_{y : (a,y)\in E} c_{(a,y)}} \, .\ee
This gives a useful electrical interpretation of recurrence. An infinite network $(G; \{R_e\})$ is recurrent if and only if $\Reff(a \lra \infty; \{R_e\})=\infty$.
It is not too hard to see that this implies the following two useful criteria for recurrence/transience. First, an infinite graph is $G$ is recurrent if and only if for some vertex $a$ there exists $c>0$ such that for any integer $m \geq 0$ there exists a finite vertex set $B$ such that
\be\label{recurrencecriterion}  \Reff( B_G(a,m) \lra G \setminus B; \{R_e\}) \geq c \, ,\ee
see \cite[Lemma 9.22]{LP}. Secondly, a network is transient if and only if there exists a unit flow from some vertex $a$ to $\infty$ with finite energy.

%

Another classical connection between random walks and effective resistances is known as the commute time identity \cite{LP} stating that
\be\label{commute} \E _a \tau_z + \E _z \tau _a = 2 \Reff(a \lra z) \sum_{e \in E} c_e  \, ,\ee
where $\tau_v$ is the hitting time of $v$ and $\E_x$ is the corresponding expectation operator of $\prob_x$. We will also use the following bound, which is an immediate consequence of (\ref{rwresist}). Given a finite network and three vertices $x,y,z$ we have
\be\label{exercise} {1 \over \Reff(x \lra \{y,z\})} \leq {1 \over \Reff(x \lra y)} + {1 \over \Reff(x \lra z)} \, .\ee

Finally, we will use the following easy bound.
\begin{lemma} \label{resistintriangle} Let $G=(V,E)$ be a finite network with resistances $\{R_e\}$ and two vertices $a$ and $z$. Let $A \subset V$ such that $a \in A$ and $z\not \in A$ and define $R^A_e = R_e$ for each edge $e$ that has both endpoints in $A$ and $R^A_e=\infty$ otherwise. Then
$$ \Reff(a \lra z ; \{R_e\}) \leq \Reff(A \lra z ; \{R_e\}) + \max_{v \in A} \Reff(a \lra v; \{R^A_e\}) \, .$$
\end{lemma}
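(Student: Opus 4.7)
The plan is to exhibit a unit flow from $a$ to $z$ in $(G,\{R_e\})$ of Dirichlet energy at most $r_1+r_2$, where $r_1:=\Reff(A \lra z;\{R_e\})$ and $r_2:=\max_{v\in A}\Reff(a\lra v;\{R^A_e\})$; by the min-energy characterization of effective resistance (the flow dual of (\ref{dirichlet})) this yields the lemma.

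First I take $i_1$ to be the unit current flow from $A$ to $z$ in $G$, so that $\En(i_1)=r_1$ and $i_1$ vanishes on every edge with both endpoints in $A$. Let $h$ be the associated voltage, normalized so that $h\equiv 0$ on $A$, $h(z)=r_1$, and $h$ is harmonic on $V\setminus(A\cup\{z\})$. The maximum principle gives $0\le h\le r_1$, hence for every $v\in A$
$$\mathrm{div}(i_1)(v)\;=\;-\sum_{w\sim v,\, w\notin A}\frac{h(w)}{R_{(v,w)}}\;\le\; 0.$$
Setting $\mu(v):=-\mathrm{div}(i_1)(v)$ therefore defines a nonnegative measure on $A$, and since the divergences of $i_1$ over $A$ must sum to $-1$, $\mu$ is in fact a probability measure on $A$.

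Next, for each $v\in A$ let $\theta_v$ denote the unit current flow from $a$ to $v$ in the subnetwork induced on $A$ with resistances $\{R^A_e\}$; it is supported on edges inside $A$ and has energy $\Reff(a\lra v;\{R^A_e\})\le r_2$. Define
$$\theta\;:=\;i_1\;+\;\sum_{v\in A}\mu(v)\,\theta_v.$$
A direct divergence calculation shows that $\theta$ has divergence $-1$ at $a$, $+1$ at $z$, and $0$ elsewhere: the contributions $+\mu(v)$ at each $v\in A$ coming from $\mu(v)\theta_v$ exactly cancel $\mathrm{div}(i_1)(v)=-\mu(v)$, and the $-1$ source at $a$ arises from $\sum_{v\in A}\mu(v)\cdot(-1)=-1$. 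Thus $\theta$ is a unit flow from $a$ to $z$.

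Finally, since $i_1$ is supported on edges with at least one endpoint in $V\setminus A$ while $\sum_v\mu(v)\theta_v$ lives on edges inside $A$, the two summands have disjoint supports, and therefore
$$\En(\theta)\;=\;\En(i_1)\;+\;\En\Bigl(\sum_{v\in A}\mu(v)\,\theta_v\Bigr)\;\le\; r_1\;+\;\sum_{v\in A}\mu(v)\,\En(\theta_v)\;\le\; r_1+r_2,$$
where the middle inequality is Jensen's inequality applied edgewise to the convex function $x\mapsto x^2$ with the probability measure $\mu$. The only substantive step is the nonnegativity of $\mu$, which is where the maximum principle is used; the rest is flow bookkeeping.
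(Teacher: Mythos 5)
Your proof is correct and takes essentially the same route as the paper: both construct a unit flow from $a$ to $z$ by superposing the unit current flow between $A$ and $z$ (supported off the interior of $A$) with a convex combination, weighted by the boundary divergences on $A$, of the unit current flows from $a$ to each $v\in A$ inside $(A,\{R^A_e\})$, and both bound the energy using disjoint supports together with Jensen's inequality. The only real difference is that you make explicit the maximum-principle justification for the weights $\mu(v)$ (the paper's $\alpha_v$) being nonnegative, which the paper simply asserts; beware, though, that with your normalization $h\equiv 0$ on $A$, $h(z)=r_1$, the Ohm's-law current associated to $h$ actually flows from $z$ to $A$, so the stated formula for $\mathrm{div}(i_1)(v)$ and the claim that $i_1$ is the flow from $A$ to $z$ implicitly carry a sign flip that should be made consistent in a careful write-up.
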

\begin{proof} Assume without loss of generality that $\max_{v \in A} \Reff(a \lra v; \{R^A_e\}) < \infty$. Consider the unit current flow $\theta^A$ from $A$ to $z$ in $(G,R_e)$ and for each $v\in A\setminus \{a\}$ let $\theta^{v,A}$ be the unit current flow in $(G, \{R_e^A\})$ from $a$ to $v$. For each $v \in A$ write
$$ \alpha_v = \sum_{u: u \not \in A, u \sim v} \theta^A(v,u) \, .$$
Since $\theta^A$ is the unit current flow we have that $\sum_{v\in A} \alpha_v = 1$ and $\alpha_v \geq 0$ for all $v\in A$. We define a new flow $\theta$ from $a$ to $z$ in $(G,R_e)$ by setting $\theta(e) = \theta^A(e)$ for any edge $e$ that has at least one endpoint not in $A$, and if $e$ has both endpoints in $A$ we set $\theta(e) = \sum_{v \in A\setminus\{a\}} \alpha_v \theta^{v,A}(e)$. This defines a unit flow from $a$ to $z$. The contribution to the energy of $\theta$ coming from edges having at least one endpoint not in $A$ is the energy of $\theta^A$ which equals $\Reff(A \lra z ;  \{R_e\})$ and the contribution coming from edges with two endpoints in $A$ is at most
\begin{eqnarray*} \sum_{e} R_e \big [ \sum_{v\in A\setminus\{a\}} \alpha_v \theta^{v,A}(e) \big ]^2 &\leq&  \sum_{v \in A \setminus \{a\}} \alpha_v \sum_{e} R_e [\theta^{v,A}(e)]^2 = \sum_{v \in A \setminus \{a\}} \alpha_v \Reff(a \lra v; \{R^A_e\}) \\
&\leq& \max_{v \in A} \Reff(a \lra v;  \{R^A_e\}) \, ,\end{eqnarray*}
where the first inequality is by Jensen's inequality.
\end{proof}

%

\section{Distributional limits of bounded degree graphs and circle packing}
Let $G_n$ be a sequence of finite planar graphs of bounded degree and assume that it has distributional limit $(U,\rho)$. The main result of \cite{BS} is that $U$ is almost surely recurrent. Our goal in this section is the prove the following theorem, providing a quantitative bound on the growth of the resistance.

\begin{theorem} \label{quantrecurrence} Let $(U,\rho)$ be the distributional limit of finite planar graphs of bounded degree. Then $(U,\rho)$ almost surely satisfies the following. There exists $c>0$ such that for any $k \geq 0$ there exists a finite set $B_k \subset U$ with $|B_k| \leq c^{-1} k$ and
$$ \Reff ( \rho \lra U \setminus B_k ) \geq c \log k \, .$$
\end{theorem}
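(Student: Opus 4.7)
The approach follows the Benjamini--Schramm circle packing strategy, now made quantitative. First, by adding edges (and possibly $O(D)$ vertices per face, if needed to preserve bounded degree) we may assume each $G_n$ is a finite triangulation of maximum degree bounded by some $D'=D'(D)$: the resulting distributional limit dominates the original via absolute continuity, and since $\Reff$ is monotone decreasing in edges, a lower bound for the triangulated limit gives one for the original. By the Koebe--Andreev--Thurston theorem each $G_n$ has a circle packing $\{(z_v, r_v)\}_{v\in V(G_n)}$ in the unit disk. For uniformly random $\rho_n$ rescale, setting $\tilde z_v = (z_v - z_{\rho_n})/r_{\rho_n}$ and $\tilde r_v = r_v / r_{\rho_n}$, so the root circle has radius $1$ and is centered at the origin. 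The Ring Lemma bounds the ratios of adjacent radii by a constant $\eta=\eta(D')$, which via compactness lets us pass along a subsequence in $n$ to a joint distributional limit $(U, \rho, \{(\tilde z_v, \tilde r_v)\}_{v \in V(U)})$, a random rooted triangulation equipped with a circle packing in the plane whose root circle has radius $1$.

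\noindent\textbf{Candidate set and Dirichlet principle.} Set $B_k = \{v \in V(U) : |\tilde z_v| \leq \sqrt{k}\}$. For the resistance lower bound I apply the dual form of resistance \eqref{dirichlet} to the clipped logarithmic test function
\[
g(v) \;=\; \min\!\Bigl\{1,\ \max\!\Bigl(0,\ \tfrac{2\log|\tilde z_v|}{\log k}\Bigr)\Bigr\},
\]
so that $g(\rho)=0$ and $g\equiv 1$ outside $B_k$. For a tangent pair $u,v$ the Euclidean separation is $|\tilde z_u-\tilde z_v|=\tilde r_u+\tilde r_v$, and the Ring Lemma gives $\tilde r_u \asymp \tilde r_v$; consequently, for edges in the annulus $\{1 \leq |\tilde z| \leq \sqrt{k}\}$ one obtains $(g(u)-g(v))^2 \leq C(\log k)^{-2}\,\tilde r_v^2 / |\tilde z_v|^2$. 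Decomposing into dyadic annuli $A_j=\{2^j\le|\tilde z|<2^{j+1}\}$, the circle-packing area bound $\sum_{v\in A_j}\tilde r_v^2 = O(4^j)$ gives $\sum_{v\in A_j}\tilde r_v^2/|\tilde z_v|^2 = O(1)$, and with $O(\log k)$ active annuli we get $\En(g)=O(1/\log k)$. Hence $\Reff(\rho\lra U\setminus B_k)\geq c\log k$.

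\noindent\textbf{Bounding $|B_k|$.} The harder half is $|B_k| \leq c^{-1}k$. I would prove this in expectation in the finite graphs via the mass-transport identity
\[
\E_{\rho_n\,{\rm unif}}|B_k| \;=\; \frac{1}{n}\sum_{v} \#\bigl\{\rho : z_v \in B_{\rm euc}(z_\rho,\sqrt{k}\, r_\rho)\bigr\},
\]
and a dyadic decomposition of the inner sum over radius bands $r_\rho \in (2^{-j-1},2^{-j}]$. Within each band the $\rho$-circles are disjoint and lie in a Euclidean ball of radius $O(2^{-j}\sqrt{k})$ around $z_v$, giving at most $O(k)$ per band by area. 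Summing over bands requires showing that only $O(1)$ bands contribute on average, which follows from the packing area constraint $\sum_v r_v^2\leq 1$, the Ring Lemma (adjacent radii are comparable so local radii do not vary wildly), and the uniform choice of $\rho_n$ concentrating the typical rescaling $r_{\rho_n}$ near a single scale. Given $\E|B_k|\leq Ck$ in the finite graphs, Fatou together with a Markov-type argument transfers the bound to the limit and produces the desired $|B_k|\leq c^{-1}k$ after adjusting constants.

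\noindent\emph{Main obstacle.} The crux is the last step: controlling the effective number of dyadic radius-bands contributing to $|B_k|$ and showing that a uniformly chosen root is typically at a ``good'' scale, so that the rescaled packing has bounded density $O(1)$ per unit area on distances up to $\sqrt{k}$. Everything else---reducing to triangulations, circle-packing, rescaling, and the logarithmic Dirichlet calculation---is essentially standard once this density estimate is in place.
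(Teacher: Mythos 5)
Your resistance estimate via the clipped logarithm test function on dyadic annuli is correct and closely parallels the paper's Lemma~\ref{resistannulus} and Corollary~\ref{resisttolarge}. But the choice $B_k = V_{\EucB(0,\sqrt k)}$ is where the argument breaks, and the ``main obstacle'' you flag is not a technical wrinkle that a better density estimate would fix --- it is a sign that the candidate set itself is wrong. There is no bound of the form $\E|V_{\EucB(0,R)}| = O(R^2)$ for the packing rescaled at a uniform root: the mass-transport identity you write gives, after dyadic decomposition of $r_\rho$, a contribution of $O(k)$ \emph{per band}, and the number of bands is governed by how fast radii can shrink, which is not uniformly controlled. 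Concretely, many tiny circles may accumulate in some small disk inside $\EucB(0,\sqrt k)$; the Ring Lemma only controls ratios of \emph{adjacent} radii and does not prevent this. Your claim that ``only $O(1)$ bands contribute on average'' is exactly what one cannot prove, and in fact it is false without removing an exceptional disk.

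The paper circumvents this by using the quantitative Benjamini--Schramm ``unsupported point'' lemma (Lemma~\ref{BSquant}, Corollary~\ref{finitegraph}), whose conclusion is weaker than a ball count in precisely the right way: with high probability there exists a point $p$ such that $|V_{\EucB(0,r)\setminus\EucB(p,r^{-1})}| \le s$, i.e.\ a small removable disk whose location you do not get to choose. Accordingly, the set $B$ in Lemma~\ref{finitegraphfinal} is not a Euclidean ball but the punctured region $V_{\EucB(0,r)\setminus\EucB(p,r^{-1})}$, and the resistance bound then has two parts --- $\Reff(\rho\leftrightarrow V_{\EucB(p,2r^{-1})})$ and $\Reff(\rho\leftrightarrow V_{\R^2\setminus\EucB(0,r)})$ --- combined via the parallel law~\eqref{exercise}. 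Moreover, the paper's Lemma~\ref{finitegraphfinal} produces a probability bound $1-c^{-1}k^{-1/3}\log k$ which is summable along $k=2^j$, so Theorem~\ref{quantrecurrence} follows by Borel--Cantelli; a bound on $\E|B_k|$ alone (even if you could get one) only gives Markov-type tail decay, which would not be summable. So the missing idea is the BS unsupported-point lemma and the consequent annular (rather than ball) choice of $B$, with a quantitative probability of success.
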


We begin with some basic estimates relating circle packing and resistances.

\subsection{Circle packing and resistance} 

Given a circle packing $P = \{C_v : v \in G\}$ of a graph $G=(V,E)$ and given a domain $D \subset \R^2$ we write $V_D \subset V$ for the set of vertices such that their corresponding circles have centers in $D$. We also write $\EucB(p,r)$ for the Euclidean ball of radius $r$ around $p$.

\begin{lemma} \label{resistannulus} Let $P = \{C_v : v \in G\}$ be a circle packing of a finite graph $G=(V,E)$ such that the ratio of radii of two tangent circles is bounded by $K$. Then for any $\alpha>1$ there exists $c=c(K,\alpha)>0$ such that for all $r>0$ and all $p \in \R^2$
$$ \Reff \big ( V_{\EucB(p,r)} \lra V_{\R^2 \setminus \EucB(p, \alpha r)} \big ) \geq c \, ,$$
provided that both sets $V_{\EucB(p,r)}$ and $V_{\R^2 \setminus \EucB(p, \alpha r)}$ are nonempty.
\end{lemma}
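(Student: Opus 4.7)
The plan is to apply the Dirichlet principle~(\ref{dirichlet}) with an explicit radial cutoff built from the packing and bound its Dirichlet energy by a constant $C(K,\alpha)$; then $\Reff\ge 1/C(K,\alpha)$. Write $z_v,r_v$ for the center and radius of $C_v$, and let $\psi:[0,\infty)\to[0,1]$ be the piecewise-linear function that equals $0$ on $[0,r]$, $1$ on $[\alpha r,\infty)$, and is linear on $[r,\alpha r]$, so $\|\psi'\|_\infty=1/((\alpha-1)r)$. Setting $g(v)=\psi(|z_v-p|)$ gives $g\equiv 0$ on $V_{\EucB(p,r)}$ and $g\equiv 1$ on $V_{\R^2\setminus\EucB(p,\alpha r)}$, so by~(\ref{dirichlet}) it suffices to prove $\En(g)\le C(K,\alpha)$.

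Tangency gives $|z_u-z_v|=r_u+r_v$ for every edge, and the Lipschitz bound on $\psi$ yields
\[
(g(u)-g(v))^2\;\le\;\min\!\left(1,\;\frac{(r_u+r_v)^2}{(\alpha-1)^2 r^2}\right);
\]
moreover an edge contributes nonzero energy only when $|z_u-p|$ and $|z_v-p|$ are not both $\le r$ and not both $\ge\alpha r$. Two standard packing inputs drive the sum: the Ring Lemma implies that tangent radii differ by at most a factor $K$ and that each circle has at most $D(K)$ tangent neighbors (since disjoint tangent neighbors of comparable size occupy pairwise disjoint angular sectors of bounded minimum width), and disjointness of the circle interiors gives $\sum_v\pi r_v^2\le\mathrm{area}(\Omega)$ whenever the circles involved lie in a region $\Omega$.

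I would then split the active edges by size. For edges with both $r_u,r_v\le(\alpha-1)r/2$ I use the Lipschitz bound; bounded degree together with $r_u+r_v\le(K+1)\min(r_u,r_v)$ reduces the sum to $\sum_v r_v^2$ over vertices whose small circle participates in an active edge, and all such vertices have $|z_v-p|=O_\alpha(r)$ (the segment $[z_u,z_v]$ meets the annulus $A=\EucB(p,\alpha r)\setminus\overline{\EucB(p,r)}$ and has length $\le(\alpha-1)r$), so the disjointness bound yields $\sum r_v^2=O((\alpha r)^2)$ and an $O_{K,\alpha}(1)$ contribution after multiplying by $1/((\alpha-1)r)^2$. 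For edges with at least one endpoint satisfying $r_v>(\alpha-1)r/2$, I use the trivial bound $(g(u)-g(v))^2\le 1$ and show, via disjointness and the Ring Lemma, that only $O_{K,\alpha}(1)$ such ``large'' circles can participate in an active edge (handling the case $|z_v-p|\ge\alpha r$ by passing to a neighbor with center in $\EucB(p,\alpha r)$ whose radius is at least $(\alpha-1)r/(2K)$). Bounded degree then caps the number of active edges touching large circles by $O_{K,\alpha}(1)$, so this contribution is also $O_{K,\alpha}(1)$.

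The main obstacle is precisely this last case: a large circle is not a priori confined to the $\alpha r$-disk, so one cannot count directly by area. The Ring Lemma resolves this because it forces any large active circle to be tangent to a comparably large circle whose center lies in $\EucB(p,\alpha r)$, after which disjointness bounds their number in terms of $K$ and $\alpha$ alone. Combining the two contributions yields $\En(g)\le C(K,\alpha)$.
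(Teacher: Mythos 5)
Your proof is correct and uses the same core strategy as the paper: plug a radial cutoff $g(v)=\psi(|z_v-p|)$ into the discrete Dirichlet principle~\eqref{dirichlet} and bound $\En(g)$ by a packing/area argument using the bounded ratio of tangent radii. Where you diverge is in handling the possibility that some circle incident to an active edge is large. The paper first disposes of the degenerate case $|V_{\EucB(p,r)}|=1$ directly (there $\Reff \geq 1/D$ through the single vertex), and in the remaining case $|V_{\EucB(p,r)}|>1$ it notes that disjointness forces one of the two circles centered in $\EucB(p,r)$ to have radius at most $r$, which in turn caps the radius of \emph{every} circle with center in $\EucB(p,\alpha r)$ by $O_\alpha(r)$ (a much larger one would swallow that small circle); hence all active circles lie in $\EucB(p,K_2 r)$ and a single area bound finishes. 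You instead avoid that case split by partitioning active edges by circle size: small circles are handled exactly as in the paper via Lipschitz control plus disjoint areas inside an $O_\alpha(r)$-disk, while large circles are counted directly --- any active large edge has an endpoint of radius at least $(\alpha-1)r/(2K)$ with center in $\overline{\EucB(p,\alpha r)}$, and a packing count shows there are only $O_{K,\alpha}(1)$ such circles, each of bounded degree. Both routes are sound; yours is slightly longer but treats $|V_{\EucB(p,r)}|=1$ uniformly, while the paper's is more compact. One small imprecision: the bound on tangent radii and the degree bound $D(K)$ are consequences of the lemma's hypothesis (ratio of tangent radii $\leq K$) plus disjointness, not of the Ring Lemma, which is invoked elsewhere in the paper to \emph{establish} such a ratio bound for triangulations.
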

\begin{proof}
In the case where $|V_{\EucB(p,r)}|=1$ the resistance is at least the inverse of the degree of the vertex. Since the ratio between the radii of tangent circles is bounded, the degrees of $G$ are bounded by some $D=D(K)<\infty$ so the resistance is at least $D^{-1}$ in this case. Thus, let us assume that $|V_{\EucB(p,r)}|>1$.
Define a function $f:\R^2 \to \R$ by
$$ f(x) = \begin{cases} 0 &\mbox{if } x \in \EucB(p,r) \\
{||x-p||-r \over (\alpha-1)r} & \mbox{if } x \in \EucB(p,\alpha r) \setminus \EucB(p,r) \\
1 & \mbox{if } x \in \R^2 \setminus \EucB(p,\alpha r) .\end{cases}
$$
Note that $f$ is $((\alpha-1)r)^{-1}$-Lipschitz. We define $g:V \to \R$ by setting $g(v)=f(\rho_v)$, where $\rho_v$ is the center of $C_v$ and bound its Dirichlet energy. For every edge $(u,v)$
$$ || g(u) - g(v)|| \leq {|| \rho_u - \rho_v || \over (\alpha-1)r } = { (r_u + r_v) \over (\alpha-1)r} \leq {(K+1) r_u \over (\alpha-1)r} \, .$$
Edges for which both $\rho_u$ and $\rho_v$ are not in $\EucB(p,\alpha r)$ contribute $0$ to the energy. Since $|V_{\EucB(p,r)}|>1$, for every edge $(u,v)$ that has one of $\rho_u$ or $\rho_v$ in $\EucB(p,\alpha r)$ the circles $C_u$ and $C_v$ are both contained in $\EucB(p, K_2 r)$ for some $K_2(K, \alpha)< \infty$. Since the interiors of the circles $\{C_v\}_{v\in V}$ are disjoint the contribution to the energy is at most
$$ \sum_{(u,v)\in E} ||g(u) - g(v)||^2 \leq {D \cdot (K+1)^2 \cdot \hbox{Area}\big [\EucB(p,K_2 r) \big ] \over  ((\alpha-1)r)^{2} } \leq K_3 \, ,$$
where $K_3=K_3(K,\alpha)<\infty$, concluding our proof by \eqref{dirichlet}.
\end{proof}

\begin{corollary} \label{resisttolarge} Let $P$ be a finite circle packing in $\R^2$ such that the ratio of radii of two tangent circles is bounded by $K$ and such that there exists a circle in $P$ entirely contained in
$\EucB(0,1)$. Then there exists a constant $c=c(K)>0$ such that for all radii $r\geq 2$ we have
$$ \Reff \big ( V_{\EucB(0,1)} \lra V_{\R^2 \setminus \EucB(0, r)} \big) \geq c \log r \, ,$$
provided that $V_{\R^2 \setminus \EucB(0, r)}$ is nonempty.
\end{corollary}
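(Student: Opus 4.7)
The plan is to apply the Dirichlet variational principle \eqref{dirichlet} to a test function built by averaging the annular test functions produced by Lemma \ref{resistannulus}. Set $m=\lfloor\log_2 r\rfloor$ and, for each $k=1,\ldots,m$, let $f_k\colon V\to[0,1]$ be the function constructed in the proof of Lemma \ref{resistannulus} with $p=0$, inner radius $2^{k-1}$, and $\alpha=2$: it vanishes on $V_{\EucB(0,2^{k-1})}$, equals $1$ on $V_{\R^2\setminus\EucB(0,2^k)}$, and has Dirichlet energy $\En(f_k)\leq K_3$ for some $K_3=K_3(K)$. I would then take $g=\tfrac{1}{m}\sum_{k=1}^m f_k$, which, since $2^m\leq r$, satisfies $g\equiv 0$ on $V_{\EucB(0,1)}$ and $g\equiv 1$ on $V_{\R^2\setminus\EucB(0,r)}$, so it is admissible in \eqref{dirichlet}.

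The heart of the argument is a \emph{bounded overlap} property: there is a constant $C_0=C_0(K)$ such that for every edge $(u,v)$ of $G$, at most $C_0$ of the functions $f_1,\ldots,f_m$ have $f_k(u)\neq f_k(v)$. This follows from the Ring Lemma: for any tangent pair $C_u,C_v$ whose interiors miss the origin, $\|\rho_u-\rho_v\|=r_u+r_v\leq (K+1)\min(r_u,r_v)\leq (K+1)\min(\|\rho_u\|,\|\rho_v\|)$, so $\|\rho_u\|$ and $\|\rho_v\|$ differ by at most a factor $K+2$. Consequently only a bounded number of dyadic thresholds $2^{k-1}$ can lie between them, and only those $k$ can give $f_k(u)\neq f_k(v)$. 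Given the overlap bound, Cauchy--Schwarz yields $|g(u)-g(v)|^2\leq\tfrac{C_0}{m^2}\sum_{k=1}^m|f_k(u)-f_k(v)|^2$, and summing over edges, $\En(g)\leq \tfrac{C_0}{m^2}\sum_{k=1}^m\En(f_k)\leq \tfrac{C_0K_3}{m}$, whence \eqref{dirichlet} gives $\Reff\geq m/(C_0K_3)\geq c\log r$ for some $c=c(K)>0$.

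The main technical obstacle is the degenerate case of a single circle $C_{u_0}$ whose interior contains the origin; the Ring Lemma does not then bound $\|\rho_v\|/\|\rho_{u_0}\|$ at tangent neighbors, so bounded overlap can fail on the edges incident to $u_0$. However, the hypothesis that some circle is entirely contained in $\EucB(0,1)$ forces $r_{u_0}<\|\rho_{u_0}\|+1$ (otherwise $C_{u_0}$ would swallow the inner circle), which I would exploit to show by direct computation that the at most $D=D(K)$ edges incident to $u_0$ contribute only $O(1/(\log r)^2)$ to $\En(g)$, negligible next to the main term.
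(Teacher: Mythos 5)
Your argument is correct, and it takes a genuinely different route from the paper's. The paper does not invoke the Dirichlet principle at this stage; instead it observes, from the bounded-ratio hypothesis together with the circle contained in $\EucB(0,1)$, that there is a constant $C=C(K)$ such that no edge joins $V_{\EucB(0,r')}$ to $V_{\R^2\setminus\EucB(0,Cr')}$ for any $r'\geq 1$. It then chops $\EucB(0,r)\setminus\EucB(0,1)$ into $\Theta(\log_C r)$ concentric shells $A_j=\EucB(0,C^j)\setminus\EucB(0,C^{j-1})$, notes that each $V_{A_j}$ is a cut-set separating $V_{A_{j-1}}$ from $V_{A_{j+1}}$, lower-bounds $\Reff(V_{A_j}\lra V_{A_{j+2}})\geq c$ by Lemma~\ref{resistannulus}, and sums by the series law. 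Your route instead builds a single admissible test function by averaging the annular test functions and applies~\eqref{dirichlet}; the geometric input is identical (tangent circles away from the origin lie at comparable distances to it, and the circle inside $\EucB(0,1)$ prevents the exceptional circle from being too large), but it is packaged as a bounded-overlap Cauchy--Schwarz estimate rather than a cut-set decomposition. Two remarks. First, the degenerate circle $C_{u_0}$ covering the origin is in fact not a separate difficulty: the bound $r_{u_0}\leq\|\rho_{u_0}\|+1$ shows that if $\|\rho_{u_0}\|\geq 1$ then $\|\rho_v\|/\|\rho_{u_0}\|$ is still bounded for neighbours $v$, while if $\|\rho_{u_0}\|<1$ then $g(u_0)=0$ and all neighbours of $u_0$ lie inside $\EucB(0,O(K))$, so the overlap constant $C_0$ simply absorbs these edges. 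This is precisely the observation the paper uses to choose its constant $C$. Second, the energy bound $\En(f_k)\leq K_3$ is derived in Lemma~\ref{resistannulus} under the side hypothesis $|V_{\EucB(p,r)}|>1$, which may fail for your choice $p=0$, $r=2^{k-1}$; one should instead use the circle contained in $\EucB(0,1)$ to control the radii of circles centered in $\EucB(0,2^k)$, which is again the same geometric fact the paper relies on.
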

\begin{proof} Since there is a circle entirely contained in $\EucB(0,1)$ and using the bounded ratio assumption we get that there exists $C=C(K)<\infty$ such that for all $r'\geq 1$ there is no edge between $V_{\EucB(0,r')}$ and $V_{\R^2 \setminus \EucB(0,Cr')}$. Assume that $r\geq C$ and consider the $k$ disjoint annuli $A_1, \ldots, A_k$ where $k= \lfloor \log_{C} r \rfloor$ and $A_j= \EucB(0,C^{j}) \setminus \EucB(0,C^{j-1})$ so that $A_j$ is contained in $\EucB(0,r) \setminus \EucB(0,1)$ for all $j$. There are no edges in $G$ between $V_{A_j}$ and $V_{A_{\ell}}$ for $1 \leq j \leq \ell - 2 \leq k-2$ hence each $V_{A_j}$ is a cut-set separating $V_{A_{j-1}}$ from $V_{A_{j+1}}$. By Lemma \ref{resistannulus} we have $\Reff(V_{A_j} \lra V_{A_{j+2}}) \geq c$ for some $c=c(K)>0$. Summing these resistances using the series law (see \cite{LP}) yields that $\Reff(V_{A_0} \lra V_{A_k}) \geq ck/2$. Finally, if $2 \leq r \leq C$, then the resistance is bounded below by another constant using Lemma \ref{resistannulus}.
\end{proof}
%

\subsection{Finite planar graphs} We recall the key lemma of \cite{BS}. Let $\C \subset \R^2$ be a finite set of points. For each $w \in \C$ we write $\rho_w$ for its {\em isolation radius}, that is, $\rho_w = \inf\{ |v-w| : v \in \C \setminus \{w\}\}$. Given $\delta>0, s>0$ and $w\in \C$ we say that $w$ is $(\delta,s)$-{\em supported} if in the disk of radius $\delta^{-1} \rho_w$ around $w$ there are at least than $s$ points of $\C$ outside of every disk of radius $\delta \rho_w$, in other words, if
$$ \inf _{p \in \R^2} \big | \C \cap B(w,\delta^{-1} \rho_w) \setminus B(p,\delta \rho_w) \big | \geq s \, .$$
Benjamini and Schramm \cite[Lemma 2.3]{BS} prove that for all $\delta\in (0,1)$ there is a constant $c = c(\delta)$ such that for
every finite $\C \subset \R^2$ and every $s \geq 2$ the set of $(\delta, s)$-supported points in $\C$
has cardinality at most $c|\C|/s$. In the following we bound $c(\delta)$.

\begin{lemma}\label{BSquant}  There exists a universal constant $A>0$ such that for all $\delta\in(0,1/2)$ and $s\geq 2$ and any finite set $\C \subset \R^2$ the number of $(\delta,s)$-supported points in $\C$ is at most ${A |\C| \delta^{-2} \log (\delta^{-1}) \over s}$.
\end{lemma}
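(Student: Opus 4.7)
The plan is to double-count pairs $(w, z)$ with $w \in \C^*$ (the set of $(\delta, s)$-supported points) and $z$ in a dyadic annulus around $w$, tracking the explicit $\delta$-dependence throughout, much in the spirit of \cite{BS}. The $\log(\delta^{-1})$ factor will come from a pigeonhole over $O(\log \delta^{-1})$ dyadic radial scales in each neighborhood $S_w$, while the $\delta^{-2}$ factor will come from a planar packing estimate using disjointness of isolation disks.

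More concretely, first for each $w \in \C^*$ I will apply the $(\delta,s)$-support condition at the test point $p = w$. Since $\delta < 1/2$ and $\rho_w$ is the isolation radius, $B(w, \delta \rho_w) \cap \C = \{w\}$, and so $|S_w \setminus \{w\}| \geq s$, where $S_w := \C \cap B(w, \delta^{-1}\rho_w)$. Second, since $S_w \setminus \{w\}$ lies in the annulus $\{z : \rho_w \leq |z-w| \leq \delta^{-1}\rho_w\}$, I partition it into the $J + 1 := \lceil \log_2 \delta^{-1}\rceil + 1$ dyadic sub-annuli $A_j^{(w)} := \{z \in \C : 2^j \rho_w \leq |z - w| < 2^{j+1}\rho_w\}$ for $j = 0, 1, \ldots, J$. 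Summing over $w \in \C^*$ and pigeonholing over the $J + 1$ scale levels yields some $j^* \in \{0, \ldots, J\}$ with $\sum_{w \in \C^*} |A_{j^*}^{(w)}| \geq |\C^*| s / (J + 1)$. Third, I rewrite this sum as $\sum_{z \in \C} M(z)$, where $M(z) := |\{w \in \C^* : z \in A_{j^*}^{(w)}\}|$, and aim to show $\sum_{z \in \C} M(z) \leq A'|\C| \delta^{-2}$ for an absolute constant $A'$. Combining then gives $|\C^*| \leq A'(J+1)|\C|\delta^{-2}/s \leq A|\C|\delta^{-2}\log(\delta^{-1})/s$, as required.

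The hard part is the multiplicity estimate $\sum_z M(z) \leq A'|\C|\delta^{-2}$. For each $w$ contributing to $M(z)$ the constraint $z \in A_{j^*}^{(w)}$ forces $\rho_w \in (|z-w|/2^{j^*+1}, |z-w|/2^{j^*}]$, so the isolation disk $B(w, \rho_w/2)$ subtends a constant angular width $\asymp 2^{-j^*}$ as seen from $z$. A na\"ive per-$z$ packing argument, using pairwise disjointness of the disks $\{B(w, \rho_w/2)\}_{w \in \C}$, gives $O(\delta^{-2})$ such $w$'s per dyadic radial scale of $|z - w|$, and summed over the a priori unbounded collection of scales this can blow up. I expect to circumvent the issue by using the global disjointness of the isolation disks in $\R^2$ together with the rigid scale relation $\rho_w \asymp 2^{-j^*}|z - w|$, aggregating over $z \in \C$ rather than attempting a pointwise bound on $M(z)$.
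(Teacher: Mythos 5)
Your approach is genuinely different from the paper's: the paper does not give a self-contained argument at all, but simply re-reads the random-grid proof of \cite[Lemma 2.3]{BS} and tracks the $\delta$-dependence of the two constants $c_0,c_1$ appearing in its last paragraph. A from-scratch double-counting proof in the spirit you describe would be a nice alternative, but as proposed it cannot work, and the difficulty you flag at the end of your third step is not a technicality to be ``circumvented''.

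The fatal step is the first one. By testing only $p=w$ you retain only the consequence $|S_w|\geq s+1$, and the statement your argument would then prove --- that the number of $w\in\C$ with $|\C\cap B(w,\delta^{-1}\rho_w)|\geq s+1$ is at most $A|\C|\delta^{-2}\log(\delta^{-1})/s$ --- is false. Take $\C$ to be a cluster of $N$ points packed in a tiny disk about the origin together with satellites $w_k=(2^{k-1},0)$ for $k=1,\dots,K$. Then $\rho_{w_k}\asymp|w_k|$ (indeed $\rho_{w_k}=2^{k-2}$ for $k\geq 2$), and since $\delta<1/2$ the ball $B(w_k,\delta^{-1}\rho_{w_k})$ contains the whole cluster, which moreover lies in the dyadic annulus $A_1^{(w_k)}$. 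Thus all $K$ satellites satisfy your weakened hypothesis with $s=N-1$, while choosing $N\asymp\delta^{-2}\log(\delta^{-1})$ and $K\gg N$ makes $A(N+K)\delta^{-2}\log(\delta^{-1})/(N-1)$ strictly smaller than $K$. Your target multiplicity estimate fails for the same reason: with $j^{\ast}=1$ one gets $\sum_z M(z)\geq KN\gg A'(K+N)\delta^{-2}$. Of course none of the $w_k$ is genuinely $(\delta,N-1)$-supported --- taking $p$ at the cluster center deletes all of $S_{w_k}$ except $O(\log\delta^{-1})$ satellites --- so the example is consistent with the true lemma, but it shows that the ``for all $p$'' quantifier encodes exactly the spread-out-ness that makes the conclusion true, and it cannot be discarded. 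No amount of clever aggregation over $z$ in step three can rescue a false intermediate claim; a correct argument must re-engage the full support condition somewhere.
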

\begin{proof} To understand the proof of this one must first read \cite[Lemma 2.3]{BS}. Our lemma is a straightforward calculation of the constants appearing in the last paragraph of the proof of \cite[Lemma 2.3]{BS}.
Indeed, in the notation of \cite{BS}, $c=c(\delta)$ is $c=2c_0^{-1} c_1$. We estimate these constants below. Given $\delta$, $c_1(\delta)$ is the number of cities in any square $S$ and is at most $A_0 \delta^{-2}$ for some universal $A_0<\infty$. The probability $c_0(\delta)$ that there exists a square that has edge length in the range $[4\delta^{-1} \rho_w, 5\delta^{-1} \rho_w]$ is readily seen to be at least $A_1 \log ^{-1} (\delta)$ for some universal $A_1 > 0$.
\end{proof}

\begin{corollary} \label{finitegraph} Let $G$ be a finite planar triangulation and $P=\{C_v : v\in G\}$ be an arbitrary circle packing of $G$. Let $\rho$ be a random uniform vertex of $G$ and let $\widehat{P}=\{\widehat{C}_v : v\in G\}$ be the circle packing obtained from $P$ by translating and dilating so that $\widehat{C}_\rho$ has radius $1$ and is centered around the origin. Then there exists a universal constant $A>0$ such that for any $r\geq 2$ and any $s\geq 2$
$$ \prob \big ( \forall p\in \R^2 \,\,\, |V_{\EucB(0,r) \setminus \EucB(p,   r^{-1})}| \geq s \big ) \leq {A r^2 \log r \over s} \, .$$
\end{corollary}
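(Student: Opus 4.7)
The plan is to apply Lemma~\ref{BSquant} to the set $\C = \{c_v : v \in V\}$ of circle centers of $P$, where $c_v$ and $r_v$ denote the center and radius of $C_v$. Since $\rho$ is uniform on $V$, writing $E_w$ for the version of the event in the statement with $w$ playing the role of the root (i.e.\ the packing translated and dilated so that $C_w$ becomes the unit disk at the origin), we have
$$\prob(E) \,=\, \frac{|\{w \in V : E_w\}|}{|V|}.$$
In unscaled coordinates, $E_w$ asserts that for every $q \in \R^2$, $|\C \cap (\EucB(c_w, r r_w) \setminus \EucB(q, r_w/r))| \geq s$. I will show that $E_w$ implies $w$ is $(\delta, s)$-supported in $\C$ for some $\delta = \Theta(1/r)$, and then apply Lemma~\ref{BSquant}.

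The geometric heart of the argument is a universal upper bound $\rho^{\C}_w \leq C_0 r_w$ on the isolation radius $\rho^\C_w := \min_{v\neq w}|c_v - c_w|$, valid for every vertex $w$ whose circle is completely surrounded by its tangent neighbors. Because $G$ is a simple triangulation, every vertex has degree $\geq 3$, and every vertex not on the outer triangular face is completely surrounded in $P$. For such $w$, the angles $\theta_i = 2\arcsin\bigl(r_{v_i}/(r_w + r_{v_i})\bigr)$ subtended at $c_w$ by its $\deg(w)$ tangent neighbors sum to $2\pi$, so the smallest is at most $2\pi/3$. Solving the resulting inequality gives $r_{v^*}/r_w \leq 2\sqrt{3}+3$, whence $\rho^\C_w \leq (4+2\sqrt{3})\,r_w$. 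Only the (at most) three outer-face vertices can fail to be completely surrounded.

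Fix such an interior $w$ and set $\delta := r_w/(r\rho^\C_w) \geq 1/((4+2\sqrt{3})r)$. Using $\rho^\C_w \geq r_w$, one verifies $\delta^{-1}\rho^\C_w = r(\rho^\C_w)^2/r_w \geq r r_w$ and $\delta \rho^\C_w = r_w/r$, so the outer ball of radius $\delta^{-1}\rho^\C_w$ around $c_w$ contains $\EucB(c_w, r r_w)$, while the inner ball of radius $\delta\rho^\C_w$ around $q$ equals $\EucB(q, r_w/r)$. Thus $E_w$ immediately gives that $w$ is $(\delta, s)$-supported in $\C$. Since being $(\delta, s)$-supported is preserved when $\delta$ is decreased (the outer ball enlarges while the excluded ball shrinks), $w$ is also $(1/((4+2\sqrt{3})r), s)$-supported. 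Applying Lemma~\ref{BSquant} with this value of $\delta$ bounds the number of such interior $w$ by $O(|V| r^2 \log r/s)$. The at most three outer-face vertices contribute an additive $3/|V|$ to $\prob(E)$, which is absorbed into the constant using $s \leq |V|$ (otherwise the event is impossible) and $r \geq 2$.

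The main obstacle is the comparison $\rho^\C_w = \Theta(r_w)$ for interior $w$. Without it, $E_w$ only forces $\rho^\C_w \leq r r_w$ (else $\EucB(c_w, r r_w)$ would contain no other center), yielding a valid $\delta$ of order only $1/r^2$ and, through Lemma~\ref{BSquant}, a weaker bound of order $r^4 \log r/s$. The triangulation hypothesis is essential here, since it provides a full ring of tangent circles at each interior vertex whose subtended angles must sum to $2\pi$, forcing the smallest tangent radius to be comparable to $r_w$.
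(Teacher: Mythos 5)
Your proof follows the paper's argument exactly: pass to the unscaled packing, observe that the event $E_w$ forces $w$ to be $(\delta,s)$-supported for $\delta = \Theta(1/r)$ via the comparison $\rho^\C_w = \Theta(r_w)$ for interior vertices, apply Lemma~\ref{BSquant}, and absorb the three boundary vertices into the constant. The paper states the comparison $r_w \leq \rho^\C_w \leq C r_w$ without proof, so the one place your write-up adds content is the explicit derivation of the upper bound -- and that sub-argument has a subtle flaw. The quantities $\theta_i = 2\arcsin\bigl(r_{v_i}/(r_w + r_{v_i})\bigr)$ are the opening angles of the tangent cones from $c_w$ to the neighboring circles, and these do \emph{not} sum to $2\pi$ in general. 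Writing $\gamma_i = \angle c_{v_i} c_w c_{v_{i+1}}$ for the angle between consecutive neighbor centers (these genuinely sum to $2\pi$ when $w$ is surrounded), the law of cosines gives
$$\cos\gamma_i = \frac{1+x_i+x_{i+1}-x_i x_{i+1}}{1+x_i+x_{i+1}+x_i x_{i+1}}, \qquad x_j := r_{v_j}/r_w,$$
and one checks $\gamma_i \leq \tfrac12\theta_i + \tfrac12\theta_{i+1}$ with equality iff $x_i = x_{i+1}$, so in fact $\sum_i \theta_i \geq 2\pi$ -- the wrong direction for your ``smallest $\theta_i \leq 2\pi/3$'' step. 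The correct route is to use $\min_i \gamma_i \leq 2\pi/\deg(w) \leq 2\pi/3$, hence $\cos\gamma_i \geq -\tfrac12$ for some $i$, which rearranges to $x_i x_{i+1} \leq 3(1+x_i+x_{i+1})$; assuming $x_i \leq x_{i+1}$ this yields $x_i \leq 3+2\sqrt{3}$, which is exactly the constant you quoted (the symmetric extremal case happens to make your shortcut give the right number). With that sub-lemma repaired, the rest of your argument -- the translation to $(\delta,s)$-supportedness, monotonicity in $\delta$, the application of Lemma~\ref{BSquant}, and the treatment of the three boundary circles -- is correct and is the same proof as in the paper.
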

\begin{proof} Apply Lemma \ref{BSquant} with $\C$ being the set of centers of $P$ and $\delta=r^{-1}$ and $s$. We deduce that the number of centers of $P$ that are $(r^{-1}, s)$-supported is at most $A s^{-1} |G| r^{2} \log r$. Since $\widehat{P}$ is a triangulation any circle $C_w$ with radius $r_w$ not in the boundary (the boundary has $3$ circles, which contributes a negligible $3|G|^{-1}$ to the probability) has $r_w \leq \rho_w \leq Cr_w$ for some universal constant $C>0$, concluding the proof.
\end{proof}

\begin{lemma} \label{finitegraphfinal} Let $G=(V,E)$ be a finite planar graph with degrees at most $D$ and let $\rho$ be a random uniform vertex. Then there exists $c=c(D)>0$ such that for all $k \geq 1$
$$ \prob \Big ( \exists B \subset V \hbox{ with } |B| \leq c^{-1} k \and \Reff(\rho \lra V\setminus B) \geq c \log k \Big ) \geq 1- c^{-1} k^{-1/3} \log k \, ,$$
where we interpret $\Reff(\rho \lra V \setminus B)=\infty$ when $B=V$.
\end{lemma}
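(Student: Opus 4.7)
The plan is to reduce to a bounded-degree planar triangulation, apply the sparsity estimate (Corollary \ref{finitegraph}) at a single carefully chosen scale, and take $B$ to be the set of vertices whose packing-circles have centers in a Euclidean disk of radius $r = k^{1/3}$ around the normalized root.

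\emph{Reduction to triangulations.} First I would embed $G$ in the plane and triangulate each face of length $\geq 4$ by adding interior vertices in a bounded-degree way, obtaining a planar triangulation $G^\triangle$ with maximum degree at most $D'=D'(D)$ and $|V^\triangle|\leq C_0(D)|V|$. If $B^\triangle\ni\rho$ satisfies the desired bounds in $G^\triangle$ and we set $B:=B^\triangle\cap V$, then $V\setminus B\subseteq V^\triangle\setminus B^\triangle$ and, since $G$ is a subgraph of $G^\triangle$, monotonicity of effective resistance under edge addition and sink enlargement gives $\Reff_G(\rho\lra V\setminus B)\geq \Reff_{G^\triangle}(\rho\lra V^\triangle\setminus B^\triangle)$. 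Since $|V|/|V^\triangle|\geq 1/C_0$, any probability bound for uniform $\rho\in V^\triangle$ transfers to one for uniform $\rho\in V$ at the cost of a factor $C_0$.

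\emph{Packing, sparsity, and the resistance bound.} Circle pack $G^\triangle$ and normalize as in Corollary \ref{finitegraph} so that $\widehat C_\rho$ is the unit disk at the origin; the Ring Lemma then gives bounded radius-ratio $K(D')$ for neighboring circles. Taking $r=k^{1/3}$ and $s=k$ in Corollary \ref{finitegraph} yields, with probability at least $1-A r^2\log r/s = 1-O(k^{-1/3}\log k)$, the existence of some $p\in\R^2$ with $|V^\triangle_{\EucB(0,r)\setminus\EucB(p,r^{-1})}|<k$. On this event I would set $B^\triangle=V^\triangle_{\EucB(0,r)}$. Since $\rho\in V^\triangle_{\EucB(0,1)}\subseteq B^\triangle$ and $V^\triangle\setminus B^\triangle=V^\triangle_{\R^2\setminus\EucB(0,r)}$, Corollary \ref{resisttolarge} combined with source-enlargement monotonicity of resistance gives
\[
\Reff_{G^\triangle}(\rho\lra V^\triangle\setminus B^\triangle)\geq c_0\log r=\tfrac{c_0}{3}\log k.
\]
The sparsity also yields $|B^\triangle|<k+|V^\triangle_{\EucB(p,r^{-1})}|$, so it remains to bound the dense-ball count $|V^\triangle_{\EucB(p,r^{-1})}|$ by $O(k)$.

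\emph{Main obstacle.} Bounding $|V^\triangle_{\EucB(p,r^{-1})}|$ is the key technical step. Disjointness of the packing circles from $\widehat C_\rho$ forces every circle centered in $\EucB(p,r^{-1})$ other than $\widehat C_\rho$ to have radius at most $|p|+r^{-1}-1$; in particular when $|p|\leq 1-r^{-1}$ the dense ball contains only $\widehat C_\rho$ and the bound is immediate. The hard regime is $|p|>1-r^{-1}$, where the dense ball lies outside the reach of $\widehat C_\rho$ and the circles there can be arbitrarily small. I expect this case to require exploiting the Ring Lemma more finely, for instance by arguing that a large cluster of disjoint small circles in $\EucB(p,r^{-1})$ corresponds via bounded neighboring-radius ratios to only $O(k)$ graph-theoretic vertices, or alternatively by applying Corollary \ref{finitegraph} a second time to the rescaled sub-packing around a vertex in the dense ball and absorbing the resulting error into the $O(k^{-1/3}\log k)$ failure probability.
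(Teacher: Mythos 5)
There is a genuine gap at exactly the point you flag as the ``main obstacle,'' and your two speculated fixes would not work. The number of vertices $|V^\triangle_{\EucB(p,r^{-1})}|$ cannot be bounded in terms of $k$: nothing prevents an arbitrarily large cluster of very small disjoint circles inside the dense ball $\EucB(p,r^{-1})$ (the Ring Lemma controls ratios between \emph{tangent} circles, not between a small circle and the root circle far away, and a second application of Corollary~\ref{finitegraph} at a rescaled center only yields yet another possibly-dense sub-ball at the next scale, with no termination). So the choice $B^\triangle = V^\triangle_{\EucB(0,r)}$ is unworkable.

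The paper avoids this entirely by \emph{excluding} the dense ball from $B$: in the nontrivial case $|V_{\EucB(p,r^{-1})}|\ge 2$, it sets $B = V_{\EucB(0,r)\setminus\EucB(p,r^{-1})}$, so $|B|\le k$ is automatic from the sparsity estimate and the dense ball is simply shipped into the sink $V\setminus B$. The sink then has two pieces, $V_{\R^2\setminus\EucB(0,r)}$ and $V_{\EucB(p,r^{-1})}$, and the resistance from $\rho$ to their union is bounded below via the parallel-law inequality \eqref{exercise}: each of $\Reff(\rho\lra V_{\R^2\setminus\EucB(0,r)})$ and $\Reff(\rho\lra V_{\EucB(p,2r^{-1})})$ is $\gtrsim\log r$ by Corollary~\ref{resisttolarge} (the latter after rescaling, using the Ring Lemma to get $\|p\|\ge 1 + c'/2$ and the observation that some circle centered in $\EucB(p,r^{-1})$ has radius $\le r^{-1}$ so that $\EucB(p,2r^{-1})$ fully contains a circle), and \eqref{exercise} combines these into $\Reff(\rho\lra V\setminus B)\gtrsim\log r$. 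The degenerate case $|V_{\EucB(p,r^{-1})}|\le 1$ is exactly where your choice $B = V_{\EucB(0,r)}$ does work. Your triangulation reduction and the application of Corollary~\ref{finitegraph} at scale $r=k^{1/3}$, $s=k$ match the paper; the missing idea is to carve the dense ball \emph{out} of $B$ and close the argument with \eqref{exercise}, not to try to bound its cardinality.
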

\begin{proof} Without loss of generality it is enough to prove this for $k$ large enough. Assume first that $G$ is a triangulation and consider the circle packing $\widehat{P}$ from Corollary \ref{finitegraph}. Apply this Corollary with $r=k^{1/3}$ and $s=k$. We get that with probability at least $1-A k^{-1/3} \log k$ there exists $p\in \R^2$ such that $|V_{\EucB(0,r) \setminus \EucB(p, r^{-1})}| \leq k$. We proceed by analyzing two cases. If $|V_{\EucB(p,r^{-1})}| \leq 1$, then we set $B=V_{\EucB(0,r)}$ so that $|B| \leq k+1$.
In this scenario, if $V_{\R^2 \setminus \EucB(0,r)} = \emptyset$, then $B=V$ and the assertion holds trivially. If $V_{\R^2 \setminus \EucB(0,r)} \neq \emptyset$, then by Corollary \ref{resisttolarge} we have
$$ \Reff (\rho \lra U \setminus B) \geq c \log k \, ,$$
where $c=c(D)>0$.

In the case where $|V_{\EucB(p,r^{-1})}| \geq 2$ we take $B=V_{\EucB(0,r) \setminus \EucB(p,   r^{-1})}$. Since $G$ is a triangulation and $C_\rho$ is a circle at the origin of radius $1$, by the Ring Lemma we deduce that there exists some $c'=c'(D)>0$ such that the center of any circle other than $C_\rho$ is of distance at least $1+c'$ from the origin. Hence, when $r$ is large enough we must have that $||p|| \geq 1 + c'/2$. Clearly, one of the circles with centers in $\EucB(p,r^{-1})$ must have radius at most $r^{-1}$. Hence, $\EucB(p, 2r^{-1})$ entirely contains a circle and so we may scale and dilate so that Corollary \ref{resisttolarge} gives
$$ \Reff(V_{\EucB(p,2r^{-1})}\lra V_{\R^2 \setminus \EucB(p,c'/2)}) \geq c \log r \, ,$$
therefore
\be\label{quant.resisttosmall} \Reff(\rho \lra V_{\EucB(p,2r^{-1})}) \geq c \log r \, .\ee
Also, by Corollary \ref{resisttolarge} we have
\be\label{quant.resisttolarge} \Reff(\rho \lra V_{\R^2 \setminus \EucB(0,r)} ) \geq c \log r \, .\ee
The inequalities (\ref{quant.resisttosmall}) and (\ref{quant.resisttolarge}) together with (\ref{exercise}) concludes the proof when $G$ is a triangulation.

If $G$ is not a triangulation, then we add edges and vertices to extend $G$ into a finite planar triangulation $T$ in the zigzag fashion as in \cite[Proof of Theorem 1.1]{BS}. After this procedure the maximal degree and the number of vertices have multiplied by at most a universal constant $K$. Let $\rho_T$ be a uniform random vertex of $T$, by the proof in the case of triangulations, with probability at least $1-Ak^{-1/3} \log k$ there exists $B' \subset T$ with $|B'|\leq c^{-1} k$ and $\Reff(\rho_T \lra U\setminus B') \geq c \log k$. We take $B = B' \cap V(G)$. Obviously $|B| \leq c^{-1} k$ and by Rayleigh's monotonicity the effective resistance only grew. Lastly, $\prob(\rho_T \in V(G)) \geq 1/K$ so by incorporating $K$ into the constant $A$ we conclude the proof.
\end{proof}


\noindent {\bf Proof of Theorem \ref{mainthm3}.} Let $G=(V,E)$ be a finite planar graph with degree bounded by $D$ and let $T\geq 2$ be an arbitrary integer. Assume without loss of generality that $G$ is connected. Let $X_0$ be a uniform random vertex. Apply Lemma \ref{finitegraphfinal} with $k=T^{1/3}$ so that with probability at least $1-c^{-1}T^{-1/9}\log T$ there exists $B \subset V$ with $|B| \leq c^{-1} T^{1/3}$ and
$$ \Reff( X_0 \lra V \setminus B) \geq c \log T \, .$$
If this event occurred, by the commute time identity (\ref{commute}) and the fact that $\Reff(a \lra z)$ is at most the graph distance between $a$ and $z$, we have that
$$ \E_{X_0} \tau_{V \setminus B} \leq 2 D|B| \Reff(X_0 \lra V\setminus B) \leq 2D c^{-2} T^{2/3} \, ,$$
where $\tau_{V \setminus B}$ is the hitting time of the random walk at $V \setminus B$. If $\tau_{X_0} \geq T$, then either $\tau_{V\setminus B} \geq T$ or $\tau_{X_0} > \tau_{V \setminus B}$. Hence by Markov's inequality and (\ref{rwresist})
$$ \prob_{X_0} \big ( \tau_{X_0} \geq T \big ) \leq {2 D c^{-2} T^{2/3} \over T} + {1 \over c D \log T} \, .$$
Putting all these together gives that
$$ \varphi_D(T) \leq c^{-1}T^{-1/9}\log T + {c^{-2} D T^{2/3} \over T} + {1 \over c D \log T} \leq {C \over \log T} \, ,$$
for some $C=C(D)>0$.
\qed \\

\noindent{\bf Proof of Theorem \ref{quantrecurrence}.} For any $k=1,2,\ldots$ write $\A_k$ for the complement of the event
$$ \big \{ \exists B \subset U \hbox{ with } |B| \leq c^{-1} k \and \Reff(\rho \lra U\setminus B) \geq c \log k \big \} \, .$$
Lemma \ref{finitegraphfinal} gives that $\prob(\A_k) \leq c^{-1} k^{-1/3} \log k$ since $(U,\rho)$ is a distributional limit of finite planar graphs of bounded degree. Borel-Cantelli implies that $\A_{2^j}$ occurs for only finitely many values of $j$. If $\A_{2^j}$ does not occur, then for all $2^{j-1} \leq k \leq 2^j$ there exists $B \subset U$ with $|B| \leq 2c^{-1} k$ and $\Reff(\rho \lra U\setminus B) \geq c \log k$. \qed

\section{Reducing to bounded degrees}

\subsection{Bounded degree distributional limits with markings}

\begin{lemma} \label{rwhitmiss} Let $G=(V,E)$ be a finite network with two distinguished vertices $a \ne z$. Let $\{R_e\}$ and $\{R'_e\}$ be two sets of resistances on $E(G)$ and let $S \subset V \setminus \{a,z\}$ be such that $R_e=R'_e$ for any $e \not \in S\times S$. Then
$$ \big | \prob_a ( \tau_z < \tau_a ) - \prob'_a(\tau_z < \tau_a) \big | \leq \prob_a(\tau_S < \tau_{\{a,z\}}) \, ,$$
where $\prob$ and $\prob'$ are the network random walks with resistances $R$ and $R'$, respectively.
\end{lemma}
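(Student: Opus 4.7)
The plan is to exploit the fact that $\prob$ and $\prob'$ are indistinguishable from the walk's point of view as long as it stays in $V \setminus S$. Indeed, any edge incident to a vertex $v \in V \setminus S$ has at least one endpoint outside $S$ and so does not lie in $S \times S$; by hypothesis its conductance is therefore the same in both networks. Normalising gives $p(v,\cdot) = p'(v,\cdot)$ for every $v \in V \setminus S$, so $\prob$ and $\prob'$ can be coupled to produce identical trajectories until the walk first enters $S$.

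With this coupling in hand, I would partition
\[
\{\tau_z < \tau_a\} \;=\; \{\tau_z < \tau_a,\ \tau_z < \tau_S\} \;\sqcup\; \{\tau_z < \tau_a,\ \tau_S < \tau_{\{a,z\}}\},
\]
according to whether the walk reaches $z$ before entering $S$ or enters $S$ before hitting $\{a,z\}$. Both the first event and the larger event $\{\tau_S < \tau_{\{a,z\}}\}$ are measurable with respect to the stopped trajectory $(X_0, X_1, \ldots, X_{\tau_S \wedge \tau_{\{a,z\}}})$, which uses only transitions out of $V \setminus S$. Consequently each of these two events has the same probability under $\prob_a$ and under $\prob'_a$.

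Subtracting the equal first pieces, the difference $\prob_a(\tau_z < \tau_a) - \prob'_a(\tau_z < \tau_a)$ equals
\[
\prob_a(\tau_z < \tau_a,\ \tau_S < \tau_{\{a,z\}}) - \prob'_a(\tau_z < \tau_a,\ \tau_S < \tau_{\{a,z\}}).
\]
Each of these two terms is nonnegative and is bounded above by the common value $\prob_a(\tau_S < \tau_{\{a,z\}}) = \prob'_a(\tau_S < \tau_{\{a,z\}})$, so the absolute value of their difference is at most $\prob_a(\tau_S < \tau_{\{a,z\}})$, which is exactly the claimed inequality. There is no substantial obstacle; the only point requiring care is verifying that the hypothesis $R_e = R'_e$ for $e \notin S \times S$ genuinely yields equal transition probabilities out of every vertex in $V \setminus S$, and hence the desired coupling of the stopped walks.
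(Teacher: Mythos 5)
Your proof is correct and is precisely the coupling argument the paper invokes when it says the lemma is ``immediate by coupling the two random walks until they hit $S \cup \{a,z\}$''; you have simply filled in the details (equality of transition kernels off $S$, the decomposition of $\{\tau_z < \tau_a\}$, and the final bound).
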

\begin{proof} Immediate by coupling the two random walks until they hit $S\cup\{a,z\}$.
\end{proof}

Next, we consider a triplet $(U, \rho, M)$, where $(U, \rho)$ is a random rooted graph as before and $M$ is a marking function $M:E(U) \to \R^+$. Conditioned on $(U,\rho,M)$ consider the simple random walk $(X_n)_{n \geq 0}$ where $X_0=\rho$. We say that $(U, \rho, M)$ is {\em stationary} if $(U,\rho,M)$ has the same distribution as $(U, X_1, M)$ in the space of isomorphism classes of rooted graphs with markings (this concept is described with more details in \cite{AldLyo}). Given a marking $M$ we extend it to $M:E(U) \cup V(U) \to \R$ by putting $M(v) = \max_{e: v \in e} M(e)$ for $v \in V(U)$. We say that $(U,\rho,M)$ has an exponential tail with exponent $\beta>0$ if $\prob(M(\rho) \geq s) \leq 2e^{-\beta s}$ for all $s \geq 0$.

\begin{lemma} \label{highmarksdontmatter} Let $(U,\rho,M)$ be stationary, bounded degree random rooted graph with markings that has an exponential tail with exponent $\beta>0$. Then almost surely there exists $K<\infty$ such that for any finite subset $B \subset V(U)$ containing $\rho$, of size at least $K$, we have
$$ \bigg | \prob_\rho( \tau_{U \setminus B} < \tau_\rho) - \prob'_\rho(\tau_{U \setminus B} < \tau_\rho) \bigg | \leq {1 \over |B|} \, ,$$
where $\prob$ and $\prob'$ are the network random walk with resistances $R$ and $R'$, respectively, where $R_e=1$ for all $e\in E(U)$ and $R'$ are any resistances satisfying $R'_e=1$ whenever $M(e) \leq 30\beta^{-1} \log |B|$.
%
%
\end{lemma}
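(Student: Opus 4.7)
The plan is to invoke Lemma~\ref{rwhitmiss} to reduce the difference to the probability that the walk visits a ``high-mark'' vertex before returning to $\rho$ or exiting $B$, and then control that probability simultaneously for all large $B$ by combining stationarity, the exponential tail, Kac's lemma, and Borel--Cantelli. Write $n := |B|$, $T_n := 30\beta^{-1}\log n$ and $S_n := \{v \in V(U) : M(v) > T_n\}$, and form the finite network $\widetilde{G}_B$ by contracting $U\setminus B$ to a single vertex $z$. Since $R_e = R'_e$ whenever $M(e) \leq T_n$, every edge on which the resistances disagree has \emph{both} endpoints $v$ satisfying $M(v) > T_n$; hence Lemma~\ref{rwhitmiss} applied with $a = \rho$ and $S = \{v \in B\setminus\{\rho\} : M(v) > T_n\} \subseteq S_n$ bounds the quantity we want by $\prob_\rho^{\widetilde{G}_B}(\tau_S < \tau_{\{\rho,z\}})$. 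Its hypothesis $\rho \notin S$ reads $M(\rho) \leq T_n$; since $\sum_n \prob(M(\rho) > T_n) \leq \sum_n 2n^{-30} < \infty$, Borel--Cantelli ensures this holds for all but finitely many $n$ almost surely.

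I will split this probability with a cutoff $N := 4Dn^2$:
\[ \prob_\rho^{\widetilde{G}_B}(\tau_S < \tau_{\{\rho, z\}}) \leq \prob_\rho^{\widetilde{G}_B}\bigl(\tau_S \leq N,\ \tau_S < \tau_{\{\rho,z\}}\bigr) + \prob_\rho^{\widetilde{G}_B}\bigl(\tau_{\{\rho,z\}} > N\bigr). \]
The network $\widetilde{G}_B$ has at most $n+1$ vertices, all of degree at most $D$, so Kac's lemma on finite networks gives $\E_\rho^{\widetilde{G}_B}[\tau_\rho^+] = 2|E(\widetilde{G}_B)|/\deg(\rho) \leq 2Dn$, and Markov's inequality bounds the second term by $2Dn/N = 1/(2n)$ deterministically.

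For the first term I observe that on the event $\{\tau_S < \tau_{\{\rho, z\}}\}$ the walker has not visited $z$ by time $\tau_S$ and so has stayed in $B$; up to that time it therefore coincides (under the obvious coupling) with simple random walk on $U$ started at $\rho$. Hence
\[ \prob_\rho^{\widetilde{G}_B}\bigl(\tau_S \leq N,\ \tau_S < \tau_{\{\rho, z\}}\bigr) \leq \prob_\rho^U(\tau_{S_n} \leq N). \]
Taking expectation over the graph and invoking the stationarity $(U, X_k, M) \stackrel{d}{=} (U, \rho, M)$ together with the exponential tail,
\[ \E\bigl[\prob_\rho^U(\tau_{S_n} \leq N)\bigr] \leq \sum_{k=1}^N \prob(M(X_k) > T_n) = N\,\prob(M(\rho) > T_n) \leq 2Nn^{-30} = 8Dn^{-28}. \]

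Markov's inequality then gives $\prob\bigl[\prob_\rho^U(\tau_{S_n} \leq N) > 1/(2n)\bigr] \leq 16Dn^{-27}$, which is summable in $n$, and Borel--Cantelli produces the desired random $K < \infty$, almost surely, such that for every $n \geq K$ both $M(\rho) \leq T_n$ and $\prob_\rho^U(\tau_{S_n} \leq N) \leq 1/(2n)$; combining with the deterministic second-term bound yields the inequality for every $B\ni\rho$ with $|B| = n \geq K$. The crucial point making the argument uniform in $B$ (rather than merely in $n$) is that the coupling estimate bounds everything in terms of $\prob_\rho^U(\tau_{S_n} \leq N)$, which depends on $B$ only through $n = |B|$ and the \emph{global} set $S_n$, so a single Borel--Cantelli event handles all subsets of a given size at once. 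The most delicate place is balancing the constants: the exponent $30$ in $T_n$ must absorb the two Markov steps and the polynomial choice of $N$, but $30$ leaves plenty of room.
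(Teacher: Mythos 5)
Your proof is correct and takes essentially the same route as the paper: reduce via Lemma~\ref{rwhitmiss}, split on a polynomial time cutoff, bound the probability of not yet exiting by time $N$ using an expected-return-time estimate (you via Kac's lemma, the paper via the commute time identity~\eqref{commute}), bound the probability of meeting a high-mark vertex by time $N$ using stationarity and the exponential tail, and finish with a Borel--Cantelli argument. The minor differences (a slightly smaller cutoff $N$, a singly-indexed rather than doubly-indexed Borel--Cantelli family, and your explicit handling of the requirement $M(\rho)\leq T_n$ for the hypotheses of Lemma~\ref{rwhitmiss}) are cosmetic.
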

\begin{proof} For any two integers $T,s\geq 1$ let $\A_{T,s}$ denote the event
$$ \A_{T,s} = \Big \{ \prob_\rho \big ( \exists t \leq T : M(X_t) \geq s \big ) \leq  T^3 e^{-\beta s/2} \Big \} \, ,$$
and note that this event is measurable with respect to $(U,\rho,M)$. Stationarity together with exponential tail implies that for any integer $t \geq 0$
$$ \E_{(U,\rho,M)} \big [ \prob_\rho (M(X_t) \geq s) \big ] \leq 2 e^{-\beta s} \, ,$$
hence the union bound gives
$$ \E_{(U,\rho,M)} \big [ \prob_\rho (\exists t \leq T : M(X_t) \geq s) \big ] \leq 2 T e^{-\beta s} \, .$$
By Markov's inequality
$$ \prob(\A_{T,s}^c) \leq {2e^{- \beta s /2} \over T^2} \, .$$
Borel-Cantelli implies that almost surely $A_{T, s}$ occurs for all but finitely many values of $T \in \N$ and $s\in \N$. For any finite set $B \subset U$ that contains $\rho$, by the commute time identity (\ref{commute}), the fact that $\Reff(\rho \lra U\setminus B)$ is at most the graph distance between $\rho$ and $U \setminus B$ and Markov's inequality
$$ \prob_\rho ( \tau_{U\setminus B} \geq T ) \leq { 2 D|B|^2 \over T} \, ,$$
where $D$ is the degree bound. Write $S= \{ v : v \in V(U) \and M(v) \geq s\}$, then for any $T,s$ such that $\A_{T,s}$ occurs we have
$$ \prob_\rho ( \tau_S < \tau_{\{\rho\}\cup (U\setminus B)} ) \leq { 2 D |B|^2 \over T} + T^3 e^{-\beta s/2} \, .$$
Now, take $T = 4D|B|^3$ and $s=30 \beta^{-1} \log |B|$ so that the right hand side is at most $|B|^{-1}$ when $|B|$ is large enough and apply Lemma \ref{rwhitmiss}.
%
%
\end{proof}

\subsection{The star-tree transform}

Let $G$ be a graph. We define the {\em star-tree} transform $G^\dagger$ of $G$ as the graph of maximal degree at most $3$ obtained by the following operations (see Figure \ref{fig2.sttransform}).
\begin{enumerate}
\item We subdivide each edge $e$ of $G$ by adding a new vertex $w_e$ of degree $2$. Denote the resulting intermediate graph by $G'$.

\item Replace each vertex $v$ of $G$ and its incident edges in $G'$ by a balanced binary tree $T_v$ with $\deg(v)$ leaves which we identify with $v$'s neighbors in~$G'$. When $G$ is planar we choose this identification so as to preserve planarity, otherwise, this is an arbitrary identification. We denote by $w_v$ the root of $T_v$. Denote the resulting graph by $G^*$.

\end{enumerate}


\begin{figure}
\begin{center}
\includegraphics[width=0.95\textwidth]{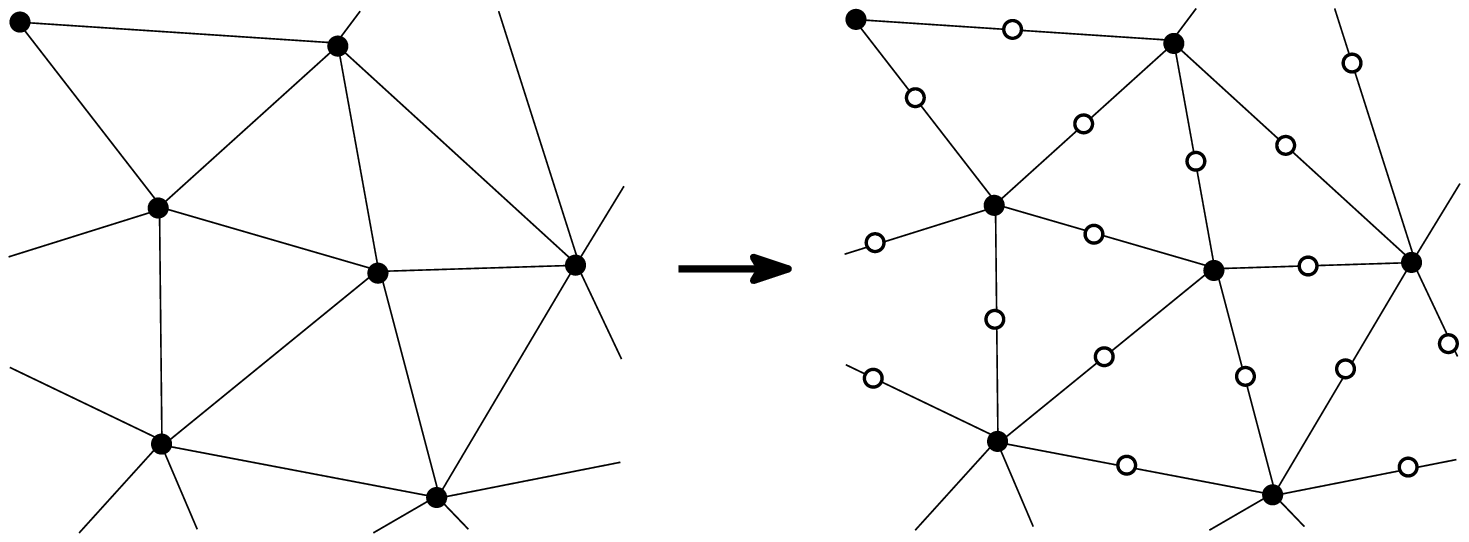}
\end{center}
\end{figure}

\begin{figure}
\begin{center}
\includegraphics[width=0.6\textwidth]{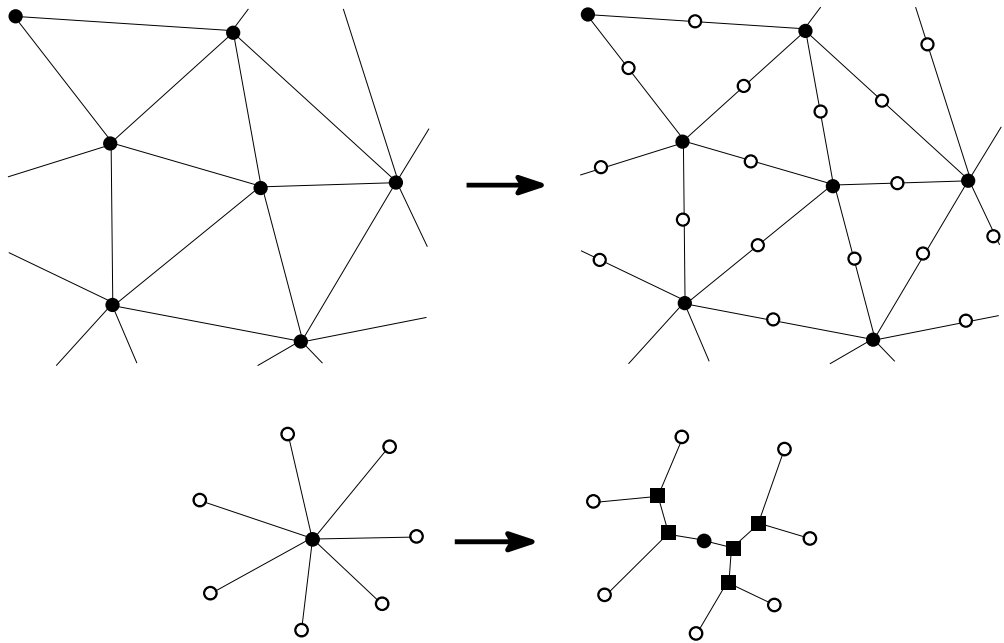}
\caption{The star-tree transform. Stage 1 (top): subdividing edges. Stage 2 (bottom): replacing stars with binary trees.}
\label{fig2.sttransform}
\end{center}
\end{figure}

%
%

\begin{lemma} \label{starrecurrence} Let $G$ be an infinite connected graph and let $G^\da$ be its star-tree transform and equip $G^\da$ with edge resistances $R$ as follows: on each edge $e$ of the binary tree $T_v$ we put $R_e = 1/\deg(v)$ where $\deg(v)$ is the degree of $v$ in $G$. Then if $(G^\da, R)$ is recurrent, then $G$ is recurrent.
\end{lemma}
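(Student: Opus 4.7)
The plan is to prove the contrapositive via the unit-flow criterion for transience recalled in Section~\ref{sec-elec}: assuming $G$ is transient, I will construct a finite-energy unit flow on $(G^\dagger,R)$. Transience of $G$ furnishes a unit flow $\theta\colon E(G)\to\R$ from some root $\rho\in V(G)$ to infinity with $\En(\theta)=\sum_{e}\theta(e)^2<\infty$; fix an arbitrary orientation of $E(G)$. The aim is to lift $\theta$ to a unit flow $\theta^\dagger$ on $G^\dagger$ from the root $w_\rho$ of $T_\rho$ to infinity, whose energy is at most a universal constant times $\En(\theta)$.

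The construction is the natural one. Each subdivision vertex $w_e$, for $e=(u,v)$, is a leaf of both $T_u$ and $T_v$; I route signed flow of magnitude $\theta(e)$ through $w_e$ from $T_u$ into $T_v$. Within each tree $T_v$ the flow is then forced by Kirchhoff's law, and on the edge of $T_v$ separating a subtree with leaf-set $S\subset\{w_e:e\ni v\}$ from the rest, $\theta^\dagger$ equals the signed sum $\sum_{e\in S}\pm\theta(e)$, with signs dictated by the orientation of $e$ relative to $v$. Since $\theta$ is conservative at every $v\neq\rho$ in $G$, the divergence of $\theta^\dagger$ at the root $w_v$ of $T_v$ is $0$ for $v\neq\rho$ and $1$ for $v=\rho$, and the contributions through each $w_e$ cancel between the two incident trees; hence $\theta^\dagger$ is a bona fide unit flow from $w_\rho$ to infinity.

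The key estimate will be $\En_{T_v}(\theta^\dagger)\le C\sum_{e\ni v}\theta(e)^2$ with $C$ universal. Balancedness of $T_v$ ensures that an edge at depth $k$ from the root separates a subtree with $|S|\le 2\deg(v)/2^k$ leaves, so Cauchy--Schwarz together with the resistance $R=1/\deg(v)$ yields
\[
\frac{1}{\deg(v)}\Big(\sum_{e\in S}\pm\theta(e)\Big)^{\!2}\le\frac{|S|}{\deg(v)}\sum_{e\in S}\theta(e)^2\le\frac{2}{2^k}\sum_{e\in S}\theta(e)^2.
\]
A fixed leaf $w_e$ lies in exactly one subtree at each depth $k=1,\ldots,\lceil\log_2\deg(v)\rceil$, so summing along its root-to-leaf path the total coefficient of $\theta(e)^2$ is at most $\sum_{k\ge 1}2/2^k\le 2$. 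Summing over $v\in V(G)$, and noting that every edge of $G$ contributes as a leaf to exactly two trees, gives $\En(\theta^\dagger)\le 4\En(\theta)<\infty$, which establishes transience of $(G^\dagger,R)$.

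The main point to appreciate, rather than a genuine obstacle, is why the resistance $R_e=1/\deg(v)$ is the right choice: this factor exactly cancels the worst-case $|S|\le\deg(v)$ at the topmost edge of $T_v$ and turns the per-depth contributions into a convergent geometric series. The only minor technicality is that $\deg(v)$ need not be a power of two, but this only replaces $d/2^k$ by $\lceil d/2^k\rceil$ and thus costs at most a universal constant, absorbed into $C$.
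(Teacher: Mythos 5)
Your proposal is correct and follows essentially the same route as the paper's proof: in both, one takes the contrapositive, lifts the finite-energy unit flow on $G$ to $G^\dagger$ by routing through the subdivision vertices and filling in the trees $T_v$ by Kirchhoff's law, and then controls the energy via Cauchy--Schwarz together with the balancedness of $T_v$, so that the resistance $1/\deg(v)$ turns the per-level contribution into a geometric series summing to a universal constant. The paper passes through the intermediate subdivided graph $G'$ and codes tree edges by binary strings, but the flow and the final bound $\En(\theta^\dagger)\le 4\,\En(\theta)$ are the same as yours.
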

\begin{proof}
Assume that $G$ is transient. Equivalently, that there is a unit flow $\theta$ on $G$, from some $a\in V(G)$ to infinity, with $\En(\theta) < \infty$. Given this flow we will construct a unit flow $\theta^\da$ on $G^\da$, from $w_a$ to infinity, such that $\En(\theta^\da) \le 4 \En(\theta)$, thus showing that $G^\da$ is also transient.

First we define a flow $\theta'$ from $a$ to infinity in $G'$ in the natural manner: for each edge $e=(x,y)$ of $G$ we set $\theta'(x,w_e)=\theta'(w_e,y)=\theta(x,y)$. Obviously $\En(\theta')=2\En(\theta)$. Next we introduce some notation. Let $v$ be some vertex of $G$. Put $k= \deg(v)$ and note that the height of $T_v$ is $h=\lceil \log_2 k \rceil$ and that some leaves may occur at height $h-1$. Recall that at stage $2$ of the transform we obtain a correspondence between the $k$ leaves of the tree and the neighbors of $v$ in $G'$, denote the latter vertices by $v_1, \ldots, v_k$ and let $e_1, \ldots, e_k$ be the unique incident edges in $T_v$, respectively. Associate with each edge $e \in T_v$ of the tree a string $M(e) \in \{0,1\}^{\leq h}$ of $0$'s and $1$'s of length at most $h$. The string $M(e)$ ``codes'' the location of the edge in $T_v$ by recording left turns with $0$ and right turns with $1$ so that the height of $e$ is $|M(e)|$ (edges touching the root $w_v$ have height $1$).

We now construct the flow $\theta^\da$. For each edge $e=(x,y)$ of $T_v$ assume that $(x,y)$ points towards the root $w_v$ (recall that $\theta^\da$ should be antisymmetric) and set
\be\label{newflow} \theta^\da(e) =  \sum_{j : M(e) \preceq M(e_j)} \theta'(v_j,v) \, ,\ee
where two strings $S_1$ and $S_2$ satisfy $S_1 \preceq S_2$ if $S_1$ is a prefix of $S_2$. Let us first verify that this is a unit flow from $a$ to $\infty$. Indeed, let $u$ be a vertex in the tree that is not a leaf or the root and denote its two children by $u_1, u_2$ and its father by $u^+$. By our construction we have $\theta^\da(u_1,u) + \theta^\da(u_2,u) = \theta^\da(u, u+)$. If $u=w_v$ and $v \ne a$, then $\theta^\da(u_1,u)+\theta^\da(u_2,u)=0$ since $\theta'$ was a flow. If $u=w_a$, then $\theta^\da(u_1,u)+\theta^\da(u_2,u)=-1$. Lastly, when $u$ is a leaf of $T_v$ the corresponding vertex $v_j$ has degree $2$ and the flow passing through it is precisely the same as in $\theta'$.

Next we bound the energy $\En(\theta^\da)$ in terms of $\En(\theta')$. By (\ref{newflow}) and Cauchy-Schwarz inequality, for any edge $e$ of $T_v$ at height $\ell$ the contribution to $\En(\theta^\da)$ from $e$ is
$$ R_e\big [\theta^\da(e) \big ]^2 =  {1 \over k} \big [ \sum_{j : M(e) \preceq M(e_j)} \theta'(v_j,v) \big ]^2 \leq {2^{h-\ell} \over k} \sum_{j : M(e) \preceq M(e_j)} [\theta'(v_j,v)]^2 \, .$$
When summing the right hand side over all edges of $T_v$ the term $[\theta'(v_j,v)]^2$ appears once for each level $\ell = 1,\ldots, h$ with coefficient $k^{-1} 2^{h-\ell}$. Hence, the total contribution of the edges of $T_v$ to $\En(\theta^\da)$ is at most
$$ \sum_{\ell=1}^h \sum_{j=1}^k {2^{h- \ell} \over k} [\theta'(v_j,v)]^2 \leq 2\sum_{j=1}^k [\theta'(v_j,v)]^2  \, .$$
Thus, when summing over all $v$ we obtain that
$$ \En(\theta^*) \leq 2\En(\theta')  = 4 \En(\theta) \, ,$$
concluding the proof.
\end{proof}

\section{Proof of main results}

We begin by fixing some notation. Recall that we are given finite graphs $G_n$ and that $\rho_n$ is a randomly chosen vertex drawn from the stationary measure on $G_n$ and that $(U,\rho)$ is the distributional limit of this sequence. We write $(G_n^\dagger, \rho_n^\dagger)$ and $(U^\dagger, \rho^\dagger)$ for the result of the star-tree transform on $G_n$ and $U$ with the roots $\rho_n^\dagger$ and $\rho^\dagger$ chosen to be uniform vertices of the trees $T_{\rho_n}$ (in $G_n^\dagger$) and $T_\rho$ (in $U^\dagger$). We also set markings on $G_n^\dagger$ and $U^\da$ by putting $M(e)=\deg(v)$ for any edge $e$ in the tree $T_v$, where $\deg(v)$ is the degree of $v$ in $G_n$ or $U$, respectively.

\begin{lemma} \label{rootstarexpdecay} The triplet $(U^\da, \rho^\da, M)$ has an exponential tail.
\end{lemma}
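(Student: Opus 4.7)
The plan is to reduce the exponential tail bound for $M(\rho^\da)$ to the assumed exponential tail for $\deg(\rho)$ via a direct double-counting argument in the finite graphs $G_n^\da$, followed by a passage to the distributional limit.

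First I would observe that $M(\rho^\da)$ is a bounded function of the $1$-neighbourhood of $\rho$ in $U$ together with independent randomness: if $\rho^\da$ is an internal vertex of $T_\rho$, every edge at $\rho^\da$ lies inside $T_\rho$ and carries mark $\deg(\rho)$, so $M(\rho^\da)=\deg(\rho)$; if $\rho^\da=w_e$ for some $e=(\rho,u)\in E(U)$, its two incident edges belong to $T_\rho$ and $T_u$ respectively, giving $M(\rho^\da)=\max(\deg(\rho),\deg(u))$. Consequently the law of $M(\rho^\da)$ is continuous in the local topology on rooted graphs, and by the defining convergence $(G_n,\rho_n)\to(U,\rho)$ it suffices to bound $\prob(M(\rho_n^\da)\geq s)$ uniformly in $n$.

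In the finite graph $G_n$, I would condition on $\rho_n=v$ (which has stationary weight $\deg(v)/(2|E(G_n)|)$) and on the uniform choice of $\rho_n^\da$ among the $|T_v|=2\deg(v)-1$ vertices of $T_v$. If $\deg(v)\geq s$, then $M(\rho_n^\da)\geq s$ is automatic, contributing $\prob(\deg(\rho_n)\geq s)$ to the total. Otherwise $M(\rho_n^\da)\geq s$ forces $\rho_n^\da$ to be a shared leaf $w_e$ with $e=(v,u)$ and $\deg(u)\geq s$, an event of conditional probability $|\{u\sim v:\deg(u)\geq s\}|/(2\deg(v)-1)$. Using $|T_v|\geq\deg(v)$ to cancel the stationary weight against the denominator, and the double-count
\[
\sum_v|\{u\sim v:\deg(u)\geq s\}|=\sum_{u:\deg(u)\geq s}\deg(u)=2|E(G_n)|\,\prob(\deg(\rho_n)\geq s),
\]
this second case also contributes at most $\prob(\deg(\rho_n)\geq s)$, so $\prob(M(\rho_n^\da)\geq s)\leq 2\,\prob(\deg(\rho_n)\geq s)$.

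Passing to the distributional limit and invoking the hypothesis $\prob(\deg(\rho)\geq s)\leq 2e^{-\beta s}$ yields $\prob(M(\rho^\da)\geq s)\leq 4e^{-\beta s}$; combining this with the trivial bound $\prob(\cdot)\leq 1$ for small $s$ gives an exponential tail with exponent $\beta/2$. I do not anticipate any substantial obstacle: the argument is essentially reversibility of the stationary measure on $G_n$ together with a local-weak passage to the limit. The only bookkeeping points that deserve care are the stationary (rather than uniform) weighting of $\rho_n$ and the fact that the leaves $w_e$ are shared between two trees $T_v$, $T_u$, but the $|T_v|$ in the denominator cancels the $\deg(v)$ in the numerator and lets the double-count close cleanly.
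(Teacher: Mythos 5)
Your proof is correct, and it reaches the same conclusion by a route that is parallel in spirit but different in execution from the paper's. The paper argues entirely in the limit $(U,\rho)$: it first crudely bounds $M(\rho^\da)\le\max\bigl(\deg(\rho),D(\rho)\bigr)$ where $D(\rho)=\max_{u\sim\rho}\deg(u)$, and then shows $D(\rho)$ has an exponential tail by observing that on the event $\{\deg(\rho)\le k,\ D(\rho)\ge k\}$ a single random walk step reaches a vertex of degree $\ge k$ with probability at least $k^{-1}$, so stationarity of $(U,\rho)$ under the walk gives $\prob(\deg(\rho)\le k,\ D(\rho)\ge k)\le k\,\prob(\deg(X_1)\ge k)=k\,\prob(\deg(\rho)\ge k)$. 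Your argument instead works directly in the finite $G_n$, averages over the uniform choice of $\rho_n^\da$ in $T_{\rho_n}$, and closes the bound by the reversibility double-count $\sum_v|\{u\sim v:\deg(u)\ge s\}|=\sum_{u:\deg(u)\ge s}\deg(u)$, then passes to the distributional limit. The two arguments rest on the same symmetry (stationarity of the root under the walk is exactly reversibility of the stationary measure on $G_n$), but yours exploits the averaging over $T_{\rho_n}$ to get the sharper $\prob(M(\rho^\da)\ge s)\le 2\,\prob(\deg(\rho)\ge s)$ with no polynomial loss, while the paper accepts the extra factor of $k$ (absorbed into the exponent). The paper's version is shorter and stays entirely in the limit object; yours is more elementary in that it only uses finite-graph reversibility and a local-weak-convergence passage, at the cost of having to justify that passage (which you do correctly: $M(\rho^\da)$ and $\deg(\rho)$ are determined by a bounded neighbourhood plus the independent choice of $\rho^\da$ in $T_\rho$, so the tail probabilities converge).
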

\begin{proof} Note that $M(\rho^\da)=\deg(v)$ where $v$ is either $\rho$ or one of its neighbors. Hence it suffices to show that if $(U,\rho)$ is a distributional limit in which $\deg(\rho)$ has an exponential tail, then $D(\rho)=\max_{u: (u,\rho)\in E} \deg(u)$ also has an exponential tail. Indeed,
$$ \prob(D(\rho) \geq k) \leq \prob(\deg(\rho) \geq k) + \prob(\deg(\rho)\leq k \and D(\rho) \geq k) \, .$$
The probability of the first term on the right hand side decays exponentially. For the second term we have
$$ \prob \big ( \deg(X_1) \geq k \, \big | \, \deg(\rho)\leq k \and D(\rho) \geq k \big ) \geq k^{-1} \, ,$$
where $X_1$ is a random uniform neighbor of $\rho$. By stationarity $\prob(\deg(X_1) \geq k)$ decays exponentially, concluding the proof.
\end{proof}

We now provide a proof for the intuitive fact that $(U^\dagger, \rho^\dagger)$ is the distributional limit of $(G_n^\dagger, \rho_n^\dagger)$, see Figure \ref{fig.diagram}.
\begin{lemma} \label{startreects} The star-tree transform is continuous on the space of distributions on rooted graphs.
\end{lemma}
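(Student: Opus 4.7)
The plan is to show that for every $r \geq 0$ the rooted ball $B_{G^\dagger}(\rho^\dagger, r)$ is a deterministic function of the pair $(B_G(\rho, r+1), \rho^\dagger)$, and then apply the continuous mapping theorem; the randomness of $\rho^\dagger$ (uniform on $T_\rho$) enters only through $\deg_G(\rho)$, which is already encoded in $B_G(\rho, 1)$, and so can be realized as an independent uniform sample on top of the data $B_G(\rho, r+1)$.

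First I would establish the deterministic claim by unpacking the construction. Recall that $G^\dagger$ replaces each edge of $G$ by a path of length $2$ through a subdivision vertex, and each vertex $v$ by a balanced binary tree $T_v$ on $\deg_G(v)$ leaves, which are identified with the subdivision vertices of $v$'s incident edges (preserving cyclic order in the planar case). In particular, traversing a single edge of $G$ requires at least $2$ steps in $G^\dagger$, so any walk of length $r$ starting from $\rho^\dagger \in T_\rho$ can reach only $G$-vertices $v$ with $d_G(\rho, v) \leq r$. Therefore $B_{G^\dagger}(\rho^\dagger, r)$ is contained in the substructure of $G^\dagger$ spanned by the trees $\{T_v : v \in B_G(\rho, r)\}$ together with their incident subdivision vertices, and this substructure, viewed as a rooted graph at $\rho^\dagger$, is fully determined by $B_G(\rho, r+1)$: the latter records all $G$-edges incident to vertices of $B_G(\rho, r)$, hence all their $G$-degrees, and in the planar case also the cyclic order of neighbors at each such vertex.

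Having this deterministic reduction, I would conclude as follows. Setting $r' = r+1$, distributional convergence $(G_n, \rho_n) \to (U, \rho)$ yields $B_{G_n}(\rho_n, r') \to B_U(\rho, r')$ in distribution as random elements of the countable, discrete space of finite rooted graphs of radius $r'$. Adjoining an independent uniform $[0,1]$ random variable, used to sample $\rho^\dagger$ uniformly from $T_\rho$ (a set whose size depends only on $\deg_G(\rho)$), preserves convergence in distribution. Applying the deterministic, hence continuous, map that outputs $B_{G^\dagger}(\rho^\dagger, r)$ and invoking the continuous mapping theorem, we obtain $B_{G_n^\dagger}(\rho_n^\dagger, r) \to B_{U^\dagger}(\rho^\dagger, r)$ in distribution. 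Since $r$ was arbitrary, this is exactly the statement that $(G_n^\dagger, \rho_n^\dagger) \to (U^\dagger, \rho^\dagger)$ in distribution, completing the proof.

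The only delicate point is verifying that $B_G(\rho, r+1)$ really does capture all the combinatorial data needed to reconstruct $B_{G^\dagger}(\rho^\dagger, r)$, namely both the binary trees $T_v$ and the identification of their leaves with subdivision vertices; in the planar case this uses the fact that the local topology on rooted planar graphs carries the planar embedding information, so the cyclic order of neighbors at each vertex in the ball is preserved under convergence.
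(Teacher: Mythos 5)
Your proof is correct and takes essentially the same approach as the paper, whose one-sentence argument is simply that the star-tree transform only increases distances, so that $B_{G^\dagger}(\rho^\dagger,m)$ is a deterministic function of $B_G(\rho,m)$ for each fixed $m$. Your elaborations—tracking that the degree data needed to rebuild the trees $T_v$ sits inside a ball of radius $r+1$, realizing the uniform choice of $\rho^\dagger$ from $\deg_G(\rho)$ via an independent uniform variable, and flagging that the leaf identification in the planar case is embedding-dependent—are all careful fleshings-out of the same observation rather than a different route.
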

\begin{proof} Let $(H^*,h^*)$ be the star-tree transform of $(H,h)$ as defined above. Then for any fixed $m>0$ the distribution of $B_{H^*}(h^*,m)$ is determined by the distribution of $B_{H}(h,m)$ since the star-tree transform only increases distances.
\end{proof}

\begin{corollary} \label{commutative} $(U^\da,\rho^\da)$ is the distributional limit of $(G_n^\da, \rho_n^\da)$.
\end{corollary}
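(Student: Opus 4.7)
\emph{Plan.} To prove Corollary~\ref{commutative} I will check that for every fixed $m\ge 0$ the random rooted ball $B_{G_n^\dagger}(\rho_n^\dagger,m)$ converges in distribution to $B_{U^\dagger}(\rho^\dagger,m)$, which is precisely the definition of $(G_n^\dagger,\rho_n^\dagger)\Rightarrow (U^\dagger,\rho^\dagger)$. The heart of the argument is a locality statement strengthening Lemma~\ref{startreects}: each edge of $G^\dagger$ sits inside a single tree $T_v$ (the shared leaves $w_e$ are leaves of both $T_u$ and $T_v$ but each adjacent edge still lies within one tree), so a walk of length $m$ in $G^\dagger$ starting anywhere in $T_\rho$ can visit only those trees $T_v$ with $v$ in a connected subgraph of $G$ of radius at most $m$ around $\rho$. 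Consequently the rooted isomorphism class of $B_{G^\dagger}(\rho^\dagger,m)$ is a deterministic function of the pair $\bigl(B_G(\rho,m+1),\,\sigma\bigr)$, where $\sigma\in V(T_\rho)$ encodes the position of $\rho^\dagger$ inside $T_\rho$; the extra radius ``$+1$'' is needed so that the degree of every $v\in B_G(\rho,m)$, and hence the isomorphism type of $T_v$, is visible.

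With this locality in hand, the convergence follows by averaging out the uniform choice of $\rho^\dagger$. For any bounded continuous $F$ on rooted graphs that depends only on the radius-$m$ ball I would introduce
$$
  \tilde F(G,\rho) \;:=\; \frac{1}{|V(T_\rho)|}\sum_{x\in V(T_\rho)} F\bigl(G^\dagger,x\bigr)
  \;=\; \E\bigl[F(G^\dagger,\rho^\dagger)\,\big|\,G,\rho\bigr].
$$
The locality step shows that $\tilde F$ depends only on $B_G(\rho,m+1)$ (the number $|V(T_\rho)|$ is itself determined by $\deg(\rho)$), hence $\tilde F$ is a bounded, locally constant, and in particular continuous, function on the space of rooted graphs. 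The hypothesis $(G_n,\rho_n)\Rightarrow (U,\rho)$ then yields
$$
  \E F(G_n^\dagger,\rho_n^\dagger) \;=\; \E\tilde F(G_n,\rho_n)\;\longrightarrow\;\E\tilde F(U,\rho)\;=\;\E F(U^\dagger,\rho^\dagger),
$$
and since $m$ and $F$ were arbitrary this completes the proof.

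\emph{Main obstacle.} There is no genuine obstruction beyond Lemma~\ref{startreects}; the only new wrinkle is that $\rho^\dagger$ is chosen uniformly inside $T_\rho$ rather than at the canonical root $w_\rho$. This is removed by the averaging $\tilde F$, relying on the observation that the structure of $T_\rho$ is a deterministic function of $\deg(\rho)$. In the planar case one must also pin down a canonical cyclic identification of the leaves of each $T_v$ with the neighbors of $v$ coming from the planar embedding, but this choice does not affect the argument since it enters both $(G_n^\dagger,\rho_n^\dagger)$ and $(U^\dagger,\rho^\dagger)$ in the same way.
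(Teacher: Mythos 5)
Your proof is correct and follows essentially the same route as the paper: the paper states this as an immediate consequence of Lemma~\ref{startreects}, whose one-line proof says the distribution of $B_{H^\dagger}(h^\dagger,m)$ is determined by that of $B_H(h,m)$ because the star-tree transform only increases distances, and this already implicitly includes averaging over the uniform root $h^\dagger\in V(T_h)$. You are simply unpacking that terse argument -- the locality observation (ball-of-radius-$m$ in $G^\dagger$ seen from anywhere in $T_\rho$ is a deterministic function of $B_G(\rho,m+1)$ and the position $\sigma\in V(T_\rho)$) and the conditional-expectation trick $\tilde F$ are exactly what ``determined by the distribution of $B_H(h,m)$'' abbreviates, so nothing is genuinely different.
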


Note that $(G^\da_n,\rho^\da_n)$ and $(U^\da, \rho^\da)$ are {\em not} stationary with respect to the simple random walk. To overcome this small technicality, let $\rho_n^\pi$ be a random root chosen from the stationary distribution on $G_n^\da$ and write $(U^*, \rho^\pi)$ for an arbitrary subsequential distributional limit. Note that both $(G_n^\da, \rho_n^\pi,M)$ and $(U^*, \rho^\pi, M)$ are stationary.


\begin{lemma}\label{contiguous} There exists a universal constant $C>0$ such that for any $n$
$$ C^{-1} \leq {\prob_{(G_n^\da, \rho^\da_n)}(A) \over \prob_{(G_n^\da, \rho^\pi_n)}(A)} \leq C \, ,$$
for any event $A$ on random rooted graphs.
\end{lemma}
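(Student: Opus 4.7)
Since the underlying graph $G_n^\dagger$ is identical under both measures, the statement reduces to a pointwise comparison of the two root laws $\mu_1(w) := \prob(\rho_n^\dagger = w)$ and $\mu_2(w) := \prob(\rho_n^\pi = w)$ on $V(G_n^\dagger)$: once we establish $C^{-1}\mu_2(w) \le \mu_1(w) \le C\mu_2(w)$ for every $w$ uniformly in $n$, summing against the indicator $\mathbf{1}_A(G_n^\dagger, w)$ immediately yields both directions of the claimed inequality.

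Unpacking the two definitions,
\[
\mu_1(w) \;=\; \sum_{v\,:\, w \in V(T_v)} \frac{\deg_{G_n}(v)}{2|E(G_n)|}\cdot\frac{1}{|V(T_v)|}, \qquad \mu_2(w) \;=\; \frac{\deg_{G_n^\dagger}(w)}{2|E(G_n^\dagger)|}.
\]
The sum defining $\mu_1$ has exactly one term when $w$ lies in a unique $T_v$ and exactly two terms when $w = w_e$ is a subdivision vertex (a shared leaf of $T_u$ and $T_v$ for $e = \{u,v\}$). To estimate the factors I will use three elementary observations: (i) $T_v$ is a balanced binary tree on $\deg_{G_n}(v)$ leaves, so $|V(T_v)| = 2\deg_{G_n}(v) - 1$ and therefore the ratio $\deg_{G_n}(v)/|V(T_v)|$ always lies in $[1/2,1]$; (ii) every $w \in V(G_n^\dagger)$ has degree in $\{1,2,3\}$; (iii) counting edges tree-by-tree gives $|E(G_n^\dagger)| = \sum_v (2\deg_{G_n}(v)-2) = 4|E(G_n)| - 2|V(G_n)|$, which by handshake is comparable to $|E(G_n)|$ up to a universal factor once we reduce (trivially) to the case in which the contribution of pendant vertices in $G_n$ is controlled. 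Plugging (i)--(iii) into the two displays shows that both $\mu_1(w)$ and $\mu_2(w)$ lie in a common interval of the form $[c,C]/|E(G_n)|$ with universal constants $c,C > 0$, giving the desired pointwise bound.

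The only genuine work is a finite case split (interior vertex of a unique $T_v$ versus shared subdivision leaf, and pendant versus non-pendant endpoints in $G_n$) together with the handshake bookkeeping for $|E(G_n^\dagger)|/|E(G_n)|$. There is no probabilistic or analytic content beyond the two definitions, so I expect no serious obstacle.
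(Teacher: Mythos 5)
Your proposal is correct and takes essentially the same route as the paper: both arguments show that each of the two root laws is pointwise comparable, up to universal constants, to a common reference (the paper uses the uniform measure on $V(G_n^\dagger)$; you use $1/|E(G_n)|$ via explicit formulas). Your version is more explicit; the one point you gloss over — that $|E(G_n^\dagger)| = 4|E(G_n)|-2|V(G_n)|$ is comparable to $|E(G_n)|$ — does require using that $G_n$ is connected (so that $|E(G_n)| \geq |V(G_n)|-1$ and the average degree is bounded away from $1$ for $|V(G_n)|$ large), and is not quite as ``trivial'' as calling it a pendant-vertex reduction suggests, but it is a genuine and standard fact in this setting and the paper's own proof leans on the analogous bound implicitly.
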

\begin{proof} Since $G_n^\da$ has bounded degree, the probability that $\rho_n^\pi=v$ for any vertex $v$ is, up to a multiplicative constant, ${1 \over |G_n^\da|}$. The same holds for $(G_n^\da, \rho^\da_n)$ because $\rho^\da_n$ was chosen uniformly from $T_{\rho_n}$ which has size proportional to $\deg(\rho_n)$ and $\rho_n$ was chosen with probability proportional to $\deg(\rho_n)$. Hence, for any fixed $n$ and an event $A$ of random rooted graphs we have that $P_{(G_n^\da, \rho^\da_n)}(A)$ and $P_{(G_n^\da, \rho^\pi_n)}(A)$ are the same up to a multiplicative constant.
\end{proof}

\begin{corollary} \label{abscts} There exists a universal constant $C>0$ such that for any event $A$
$$ C^{-1} \leq {\prob_{(U^\da, \rho^\da)}(A) \over \prob_{(U^\da, \rho^\pi)}(A)} \leq C \, .$$
In particular, $(U^\da,\rho^\da)$ and $(U^\da,\rho^\pi)$ are absolutely continuous with respect to each other.
\end{corollary}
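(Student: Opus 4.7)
The plan is to transfer the finite-$n$ inequality of Lemma \ref{contiguous} to the two distributional limits by a weak-convergence argument on cylinder events, and then extend to all Borel events by a standard measure-theoretic approximation.

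Recall that the metric on isomorphism classes of rooted graphs is essentially ultrametric: two rooted graphs lie within distance $1/(r+1)$ iff their $r$-balls around the root are isomorphic. Consequently, for every finite rooted graph $H_0$ and every $r \geq 0$, the cylinder event
$$ A_{H_0, r} = \{(G, \rho) : B_G(\rho, r) \cong H_0\} $$
is \emph{clopen}, and so weak convergence of rooted-graph laws yields equality of probabilities on such events. By Corollary \ref{commutative} we have $(G_n^\da, \rho_n^\da) \Rightarrow (U^\da, \rho^\da)$, and by construction $(U^\da, \rho^\pi)$ is the distributional limit of $(G_{n_k}^\da, \rho_{n_k}^\pi)$ along some subsequence $(n_k)$. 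Therefore, for any cylinder event $A$,
$$ \prob_{(U^\da, \rho^\da)}(A) = \lim_{k \to \infty} \prob_{(G_{n_k}^\da, \rho_{n_k}^\da)}(A), \qquad \prob_{(U^\da, \rho^\pi)}(A) = \lim_{k \to \infty} \prob_{(G_{n_k}^\da, \rho_{n_k}^\pi)}(A). $$
Applying Lemma \ref{contiguous} to each $G_{n_k}^\da$ and passing to the limit along $(n_k)$ yields
$$ C^{-1} \prob_{(U^\da, \rho^\pi)}(A) \leq \prob_{(U^\da, \rho^\da)}(A) \leq C \prob_{(U^\da, \rho^\pi)}(A) $$
for every cylinder event $A$, with the same universal constant $C$ from Lemma \ref{contiguous}.

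To conclude, I would extend the inequality to every Borel event. The cylinder events at a fixed radius $r$ are pairwise disjoint, so by countable additivity the two-sided bound persists on the $\sigma$-algebra $\mathcal{F}_r$ generated by cylinder events at radius $r$; the union $\bigcup_r \mathcal{F}_r$ is an algebra generating the Borel $\sigma$-algebra on rooted graphs, and the inequality holds throughout this algebra. Given any Borel event $A$ and $\epsilon > 0$, the standard approximation of Borel sets by algebra elements in total variation produces $B \in \bigcup_r \mathcal{F}_r$ with $(\prob_{(U^\da, \rho^\da)} + \prob_{(U^\da, \rho^\pi)})(A \triangle B) < \epsilon$. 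Applying the cylinder-level inequality to $B$ and accounting for the symmetric-difference error gives
$$ \prob_{(U^\da, \rho^\da)}(A) \leq C \prob_{(U^\da, \rho^\pi)}(A) + (C+1)\epsilon $$
together with the symmetric reverse bound; letting $\epsilon \to 0$ completes the proof, and mutual absolute continuity is then immediate. The only genuinely technical step is this extension from the generating algebra to the full Borel $\sigma$-algebra; the weak-convergence input is clean precisely because cylinder events are clopen and hence carry no boundary-of-set issues.
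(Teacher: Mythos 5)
Your proof is correct and is essentially what the paper means by ``Immediate from Lemma \ref{contiguous} and Corollary \ref{commutative}'' --- you simply unpack that word: pass the finite-$n$ two-sided bound through weak convergence on cylinder events (which is even more direct here than the clopen-set Portmanteau argument, since the paper's definition of distributional convergence is precisely convergence in distribution of the radius-$r$ balls for each $r$), and then extend from the generating algebra $\bigcup_r \mathcal{F}_r$ to the Borel $\sigma$-algebra by the standard approximation of Borel sets by algebra elements. No gaps; the approach matches the paper's.
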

\begin{proof} Immediate from Lemma \ref{contiguous} and Corollary \ref{commutative}.
\end{proof}

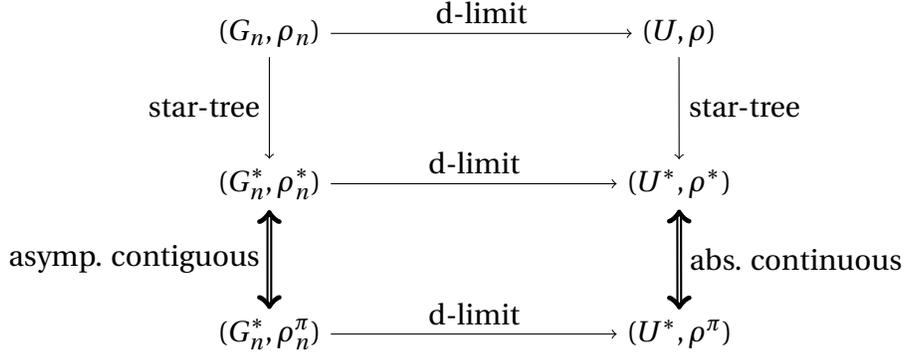
\begin{figure}
\centering
\begin{tikzpicture}[node distance=2cm, auto]\label{fid.diagram}
  \node (Gn) {$(G_n, \rho_n)$};
  \node (Urho) [right=4cm of Gn] {$(U,\rho)$};
  \node (Gnstar) [below of=Gn] {$(G_n^\dagger, \rho_n^\dagger)$};
  \node (Urhostar) [right=3.8cm of Gnstar] {$(U^\dagger, \rho^\dagger)$};
  \node (Gnstarst) [below of=Gnstar] {$(G_n^\dagger, \rho_n^{\pi})$};
  \node (Urhostarst) [right=3.8cm of Gnstarst] {$(U^\dagger, \rho^{\pi})$};
  \draw[->] (Gn) to node {d-limit} (Urho);
  \draw[->] (Gn) to node [swap] {star-tree} (Gnstar);
  \draw[->] (Gnstar) to node {d-limit} (Urhostar);
  \draw[->] (Urho) to node {star-tree} (Urhostar);
  \draw[<->,double, shorten <=.5pt, thick] (Gnstar) to node [swap] {asymp. contiguous} (Gnstarst);
  \draw[->] (Gnstarst) to node {d-limit} (Urhostarst);
  \draw[<->,double, shorten <=.5pt, thick] (Urhostar) to node {abs. continuous} (Urhostarst);
\end{tikzpicture}
\caption{Commutative diagram} \label{fig.diagram}
\end{figure}

\noindent {\bf Proof of Theorem \ref{mainthm}.} For convenience, we use below the standard $O$-notation. Given two sequences of non-negative numbers $f(k), g(k)$ we write $f = O(g)$ or $g = \Omega(f)$ if there exists $C<\infty$ such that $f(k) \leq Cg(k)$ for sufficiently large $k$. We write $f = \Theta(g)$ if $f = O(g)$ and $g=O(f)$.

By definition, $(U^\da,\rho^\pi,M)$ is a distributional limit of bounded degree finite planar graphs and by Lemma \ref{rootstarexpdecay} and Corollary \ref{abscts}, it has an exponential tail. Apply Theorem \ref{quantrecurrence} to get that almost surely there exist subsets $B_k \subset U^\da$ with $|B_k| = O(k)$ such that
\be\label{reffestimate} \Reff(\rho^\pi \lra U^\da \setminus B_k) \geq \Omega(\log k) \, .\ee
Assume without loss of generality that $|B_k| = \Theta(k)$ (otherwise we can always add vertices and the resistance above only grows). We define three different networks with underlying graph $U^\dagger$ by specifying the edge resistances for each edge $e$ as follows,
\begin{eqnarray*}
R^{{\rm{unit}}}_e = 1 \qquad \and \qquad R^{{\rm{mark}}}_e = M(e)^{-1} \, ,
\end{eqnarray*}
and given some $s>0$ we define
$$ R^{s}_e = \begin{cases} 1 &\mbox{if } M(e) \leq s \, ,\\
M(e)^{-1} &\mbox{otherwise} \, .\end{cases}$$
Now, since $|B_k| = \Theta(k)$ and (\ref{reffestimate}), Lemma \ref{highmarksdontmatter} together with (\ref{rwresist}) gives
$$ \Reff \big (\rho^{\pi} \lra U^\da \setminus B_k \, ; \, \{R^{{\rm{C\log k}}}_e\}  \big ) \geq \Omega(\log k) \, ,$$
for some $C<\infty$ depending only on the exponential decay rate. For any $m \geq 0$, by Lemma \ref{resistintriangle} with $A=B_{U^\da}(\rho^{\pi},m)$ we have
$$ \Reff \big (B_{U^\da}(\rho^{\pi},m) \lra U^\da \setminus B_k \, ; \, \{R^{{\rm{C\log k}}}_e\}  \big ) \geq \Omega(\log k) - m \, ,$$
since $R^{C \log k}_e \leq 1$ for all $e$ hence $\Reff(\rho^\pi \lra v) \leq m$ for all $v\in B_{U^\da}(\rho^{\pi},m)$. We have $R^{{\rm{mark}}}_e \geq [C \log k]^{-1} R^{C \log k}_e$ for all $e$, hence
$$ \Reff \big (\partial B_{U^\da}(\rho^{\pi},m) \lra U^* \setminus B_k \, ; \, \{R^{{\rm mark}}_e\} \big ) \geq \Omega(1) - O(m / \log k) \, .$$
All this occurs almost surely in $(U^\da, \rho^\pi)$. Corollary \ref{abscts} shows that almost surely
$$ \Reff \big (\partial B_{U^\da}(\rho^{\da},m) \lra U^* \setminus B_k \, ; \, \{R^{{\rm mark}}_e\} \big ) \geq \Omega(1) - O(m / \log k) \, ,$$
for any fixed $m \geq 0$. By (\ref{recurrencecriterion}) we deduce that the network $(U^\da, \rho^\da)$ with edge resistances $\{R^{{\rm mark}}_e\}$ is almost surely recurrent. Lemma \ref{starrecurrence} implies that $U$ is almost surely recurrent, concluding our proof. \qed \\


\noindent {\bf Proof of Corollaries \ref{maincor} and \ref{maincor2}.} Follows immediately since the UIPT and UIPQ are distributional limits of finite planar graphs, and it is known that the degree of the root has an exponential tail, see \cite[Lemma 4.1, 4.2]{AS} and \cite{GR} for the UIPT and \cite[Proposition 9]{BC1} for the UIPQ. \qed \\

\section*{Acknowledgements} We are indebted to Omer Angel for many fruitful discussions and for comments on a previous version of this manuscript. We are grateful to Itai Benjamini, Tom Meyerovitch, Gr\'egory Miermont, Gourab Ray, Juan Souto for many useful conversations. We also thank the participants of the circle packing reading seminar held in UBC at the fall of 2011.
We thank Nicolas Curien for bringing \cite[Proposition 9]{BC1} to our attention.

\begin{bibdiv}
\begin{biblist}

\bib{ADJ}{book}{
    AUTHOR = {Ambj{\o}rn, Jan},
    AUTHOR = {Durhuus, Bergfinnur},
    AUTHOR = {Jonsson, Thordur},
     TITLE = {Quantum geometry},
    SERIES = {Cambridge Monographs on Mathematical Physics},
      NOTE = {A statistical field theory approach},
 PUBLISHER = {Cambridge University Press},
   ADDRESS = {Cambridge},
      YEAR = {1997},
     PAGES = {xiv+363},
}

\bib{AldLyo}{article}{
    AUTHOR = {Aldous, David},
    AUTHOR = {Lyons, Russell},
     TITLE = {Processes on unimodular random networks},
   JOURNAL = {Electron. J. Probab.},
  FJOURNAL = {Electronic Journal of Probability},
    VOLUME = {12},
      YEAR = {2007},
     PAGES = {no. 54, 1454--1508},
      ISSN = {1083-6489},
}

\bib{AldSte}{article}{
    AUTHOR = {Aldous, David},
    Author = {Steele, J. Michael},
     TITLE = {The objective method: probabilistic combinatorial optimization
              and local weak convergence},
 BOOKTITLE = {Probability on discrete structures},
    SERIES = {Encyclopaedia Math. Sci.},
    VOLUME = {110},
     PAGES = {1--72},
 PUBLISHER = {Springer},
   ADDRESS = {Berlin},
      YEAR = {2004},
}

\bib{A}{article}{
    AUTHOR = {Angel, O.},
     TITLE = {Growth and percolation on the uniform infinite planar
              triangulation},
   JOURNAL = {Geom. Funct. Anal.},
  FJOURNAL = {Geometric and Functional Analysis},
    VOLUME = {13},
      YEAR = {2003},
    NUMBER = {5},
     PAGES = {935--974},
}

\bib{AS}{article}{
    AUTHOR = {Angel, Omer},
    AUTHOR = {Schramm, Oded},
     TITLE = {Uniform infinite planar triangulations},
   JOURNAL = {Comm. Math. Phys.},
  FJOURNAL = {Communications in Mathematical Physics},
    VOLUME = {241},
      YEAR = {2003},
    NUMBER = {2-3},
     PAGES = {191--213},
}

\bib{BC1}{article}{
    AUTHOR = {Benjamini, Itai},
    AUTHOR = {Curien, Nicolas},
     TITLE = {Simple random walk on the uniform infinite planar quadrangulation: Subdiffusivity via pioneer points},
   JOURNAL = {Preprint},
}

\bib{BC2}{article}{
    AUTHOR = {Benjamini, Itai},
    AUTHOR = {Curien, Nicolas},
     TITLE = {Ergodic theory on stationary random graphs},
   JOURNAL = {Preprint},
}

\bib{BS}{article}{
    AUTHOR = {Benjamini, Itai},
    AUTHOR = {Schramm, Oded},
     TITLE = {Recurrence of distributional limits of finite planar graphs},
   JOURNAL = {Electron. J. Probab.},
  FJOURNAL = {Electronic Journal of Probability},
    VOLUME = {6},
      YEAR = {2001},
     PAGES = {no. 23, 13 pp. (electronic)},
}

\bib{BS2}{article}{
    AUTHOR = {Benjamini, Itai},
    AUTHOR = {Schramm, Oded},
     TITLE = {Harmonic functions on planar and almost planar graphs and
              manifolds, via circle packings},
   JOURNAL = {Invent. Math.},
  FJOURNAL = {Inventiones Mathematicae},
    VOLUME = {126},
      YEAR = {1996},
    NUMBER = {3},
     PAGES = {565--587},
}

\bib{BCKL}{article}{
    AUTHOR = {Borgs, C.},
    AUTHOR = {Chayes, J. T.},
    AUTHOR = {Kahn, J.},
    AUTHOR = {Lov{\'a}sz, L.},
     TITLE = {Left and right convergence of graphs with bounded degree},
   JOURNAL = {Preprint},
}

\bib{BCLSV}{article}{
    AUTHOR = {Borgs, C.},
    AUTHOR = {Chayes, J. T.},
    AUTHOR = {Lov{\'a}sz, L.},
    AUTHOR = {S{\'o}s, V. T.},
    AUTHOR = {Vesztergombi, K.},
     TITLE = {Convergent sequences of dense graphs. {I}. {S}ubgraph
              frequencies, metric properties and testing},
   JOURNAL = {Adv. Math.},
  FJOURNAL = {Advances in Mathematics},
    VOLUME = {219},
      YEAR = {2008},
    NUMBER = {6},
     PAGES = {1801--1851},
}

\bib{CD}{article}{
    AUTHOR = {Chassaing, Philippe},
    AUTHOR = {Durhuus, Bergfinnur},
     TITLE = {Local limit of labeled trees and expected volume growth in a
              random quadrangulation},
   JOURNAL = {Ann. Probab.},
  FJOURNAL = {The Annals of Probability},
    VOLUME = {34},
      YEAR = {2006},
    NUMBER = {3},
     PAGES = {879--917},
}

\bib{CS}{article}{
    AUTHOR = {Chassaing, Philippe}
    AUTHOR = {Schaeffer, Gilles},
     TITLE = {Random planar lattices and integrated super{B}rownian
              excursion},
   JOURNAL = {Probab. Theory Related Fields},
  FJOURNAL = {Probability Theory and Related Fields},
    VOLUME = {128},
      YEAR = {2004},
    NUMBER = {2},
     PAGES = {161--212},
}

\bib{CV}{article}{
    AUTHOR = {Cori, Robert},
    AUTHOR = {Vauquelin, Bernard},
     TITLE = {Planar maps are well labeled trees},
   JOURNAL = {Canad. J. Math.},
  FJOURNAL = {Canadian Journal of Mathematics. Journal Canadien de
              Math\'ematiques},
    VOLUME = {33},
      YEAR = {1981},
    NUMBER = {5},
     PAGES = {1023--1042},
}

\bib{CMM}{article}{
    AUTHOR = {Curien, Nicolas},
    AUTHOR = {M\'{e}nard, L.},
    AUTHOR = {Miermont, Gr\'egory},
    TITLE  = {A view from infinity of the uniform infinite planar quadrangulation},
    JOURNAL = {Preprint},
}

\bib{DS}{article}{
    AUTHOR = {Duplantier, Bertrand}
    AUTHOR = {Sheffield, Scott},
     TITLE = {Liouville quantum gravity and {KPZ}},
   JOURNAL = {Invent. Math.},
  FJOURNAL = {Inventiones Mathematicae},
    VOLUME = {185},
      YEAR = {2011},
    NUMBER = {2},
     PAGES = {333--393},
}

\bib{GR}{article}{
    AUTHOR = {Gao, Zhicheng},
    AUTHOR = {Richmond, L. Bruce},
     TITLE = {Root vertex valency distributions of rooted maps and rooted
              triangulations},
   JOURNAL = {European J. Combin.},
  FJOURNAL = {European Journal of Combinatorics},
    VOLUME = {15},
      YEAR = {1994},
    NUMBER = {5},
     PAGES = {483--490},
}

\bib{RG}{article}{
    AUTHOR = {Gill, James T.},
    AUTHOR = {Rohde, Steffen},
     TITLE = {On the Riemann surface type of Random Planar Maps},
   JOURNAL = {Preprint},
}

\bib{Krik}{article}{
    AUTHOR = {Krikun, Maxim},
    TITLE  = {Local structure of random quadrangulations},
    JOURNAL= {Preprint}
}

\bib{Le1}{article}{
    AUTHOR = {Le Gall, Jean-Fran{\c{c}}ois},
     TITLE = {The topological structure of scaling limits of large planar
              maps},
   JOURNAL = {Invent. Math.},
  FJOURNAL = {Inventiones Mathematicae},
    VOLUME = {169},
      YEAR = {2007},
    NUMBER = {3},
     PAGES = {621--670},
}

\bib{Le2}{article}{
    AUTHOR = {Le Gall, Jean-Fran{\c{c}}ois},
    TITLE  = {Uniqueness and universality of the Brownian map.},
    JOURNAL = {Preprint},
}

\bib{LeM}{article}{
    AUTHOR = {Le Gall, Jean-Francois},
    AUTHOR = {Miermont, Gregory},
    TITLE  = {Scaling limits of random trees and planar maps},
    JOURNAL = {Lecture notes for the Clay Mathematical Institute Summer School in Buzios,
July 11 - August 7, 2010.}
}

\bib{LS}{article}{
    AUTHOR = {Lov{\'a}sz, L{\'a}szl{\'o}},
    AUTHOR = {Szegedy, Bal{\'a}zs},
     TITLE = {Limits of dense graph sequences},
   JOURNAL = {J. Combin. Theory Ser. B},
  FJOURNAL = {Journal of Combinatorial Theory. Series B},
    VOLUME = {96},
      YEAR = {2006},
    NUMBER = {6},
     PAGES = {933--957},
}

\bib{LP}{book}{
    author = {{R. Lyons with Y. Peres}},
    title = {Probability on Trees and Networks},
    publisher = {Cambridge University Press},
    date = {2008},
    note = {In preparation. Current version available at \texttt{http://mypage.iu.edu/\~{}rdlyons/prbtree/book.pdf}},
}

\bib{MM}{article}{
    AUTHOR = {Marckert, Jean-Fran{\c{c}}ois}
    AUTHOR = {Mokkadem, Abdelkader},
     TITLE = {Limit of normalized quadrangulations: the {B}rownian map},
   JOURNAL = {Ann. Probab.},
  FJOURNAL = {The Annals of Probability},
    VOLUME = {34},
      YEAR = {2006},
    NUMBER = {6},
     PAGES = {2144--2202},
}

\bib{Men}{article}{
    AUTHOR = {M{\'e}nard, Laurent},
     TITLE = {The two uniform infinite quadrangulations of the plane have
              the same law},
   JOURNAL = {Ann. Inst. Henri Poincar\'e Probab. Stat.},
  FJOURNAL = {Annales de l'Institut Henri Poincar\'e Probabilit\'es et
              Statistiques},
    VOLUME = {46},
      YEAR = {2010},
    NUMBER = {1},
     PAGES = {190--208},
}

\bib{Mier}{article}{
AUTHOR = {Miermont, Gregory},
TITLE  = {The Brownian map is the scaling limit of uniform random plane quadrangulations},
JOURNAL = {Preprint},
}

\bib{RS}{article}{
    AUTHOR = {Rodin, Burt},
    AUTHOR = {Sullivan, Dennis},
     TITLE = {The convergence of circle packings to the {R}iemann mapping},
   JOURNAL = {J. Differential Geom.},
  FJOURNAL = {Journal of Differential Geometry},
    VOLUME = {26},
      YEAR = {1987},
    NUMBER = {2},
     PAGES = {349--360},
}

\bib{R}{article}{
    AUTHOR = {Rohde, Steffen},
     TITLE = {Oded Schramm: from circle packing to SLE},
   JOURNAL = {Ann. Probab.},
  FJOURNAL = {The Annals of Probability},
    VOLUME = {39},
      YEAR = {2011},
    NUMBER = {5},
     PAGES = {1621--1667},
}

\bib{Sc}{article}{
    AUTHOR = {Schaeffer, Gilles},
    TITLE  = {Conjugaison d'arbres et cartes combinatoires al\'eatoires},
    JOURNAL= {PhD thesis, Universit\'e Bordeaux I, 1998}
}

\bib{St}{article}{
    AUTHOR = {Stephenson, Kenneth},
     TITLE = {Introduction to circle packing},
      NOTE = {The theory of discrete analytic functions},
 PUBLISHER = {Cambridge University Press},
   ADDRESS = {Cambridge},
      YEAR = {2005},
     PAGES = {xii+356},
}

\bib{T}{article}{
    AUTHOR = {Tutte, W. T.},
     TITLE = {A census of planar triangulations},
   JOURNAL = {Canad. J. Math.},
  FJOURNAL = {Canadian Journal of Mathematics. Journal Canadien de
              Math\'ematiques},
    VOLUME = {14},
      YEAR = {1962},
     PAGES = {21--38},
}

\end{biblist}
\end{bibdiv}

\end{document}